\newcounter{contador}
\newtheorem{propo}[contador]{Proposition}
\newtheorem{teo}[contador]{Theorem}
\newtheorem{lem}[contador]{Lemma}
\newtheorem{corol}[contador]{Corollary}
\newtheorem{nota}[contador]{Remark}
\newcommand{\g}{\gamma}
\newcommand{\su}{{\mathbb S}^1} 
\newcommand{\R}{{\mathbb R}}
\newcommand{\C}{{\mathbb C}}
\newcommand{\N}{{\mathbb N}}
\newcommand{\Q}{{\mathbb Q}}
\newcommand{\V}{{\cal{V}}}
\newcommand{\U}{{\cal{U}}}
\title{Non-integrability of measure preserving maps\\ via Lie symmetries\footnote{{\bf Acknowledgements.}
The authors are supported by Ministry of Economy and Competitiveness
of the Spanish Government through grants MTM2013-40998-P (first and
second authors) and DPI2011-25822 (third author). The first  and
second authors are also supported by the grant 2014-SGR-568  from
AGAUR, Generalitat de Catalunya  and  BREUDS project
FP7-PEOPLE-2012-IRSES-318999. The third author is  supported by the
grant  2014-SGR-859  from AGAUR, Generalitat de Catalunya.}}
\author{Anna Cima$^{(1)}$, Armengol Gasull$^{(1)}$ and V\'{\i}ctor Ma\~{n}osa $^{(2)}$
  \\*[.1truecm]
{\small \textsl{$^{(1)}$ Departament de Matem\`{a}tiques, Facultat
de Ci\`{e}ncies,}}
\\*[-.25truecm] {\small \textsl{Universitat Aut\`{o}noma de Barcelona,}}
\\*[-.25truecm] {\small \textsl{08193 Bellaterra, Barcelona, Spain}}
\\*[-.25truecm] {\small \textsl{cima@mat.uab.cat,
gasull@mat.uab.cat}}\\
\\*[-.25truecm] {\small \textsl{$^{(2)}$ Departament de Matem\`{a}tica Aplicada III,}}
\\*[-.25truecm] {\small \textsl{Control, Dynamics and Applications Group (CoDALab)}}
\\*[-.25truecm] {\small \textsl{Universitat Polit\`{e}cnica de Catalunya}}
\\*[-.25truecm] {\small \textsl{Colom 1, 08222 Terrassa, Spain}}
\\*[-.25truecm] {\small \textsl{victor.manosa@upc.edu}}}
\begin{document}

\maketitle
\begin{abstract}
We consider the problem of characterizing, for certain natural
number $m$,  the local $\mathcal{C}^m$-non-integrability near
elliptic fixed points of smooth planar measure preserving maps.  Our
criterion relates this non-integrability with the existence of some
Lie Symmetries associated to the maps, together with the study of
the finiteness of its periodic points. One of the steps in the proof
uses the regularity of the  period function on the whole period
annulus for non-degenerate centers, question that we believe that is
interesting by itself. The obtained criterion can be applied to
prove the local non-integrability of the Cohen map and of  several
rational maps coming from second order difference equations.
\end{abstract}

\noindent {\sl  Mathematics Subject Classification 2010:} 34C14, 37C25, 37J30, 39A05.

\noindent {\sl Keywords:} Integrability and non-integrability of
maps, measure preserving maps, Lie symmetries, integrable vector
fields, period function, isochronous centers, Cohen map, difference
equations.

\section{Introduction and main results}\label{S-intro}

In the last years the development of criteria to determine the integrable nature of discrete
dynamical systems has  been the focus of an  intensive research activity (see \cite{GR1} and
references therein), however there are very few non-integrability results for discrete dynamical
systems, see for instance \cite{CGM13a,CS, CZ,DR,RT, St} and their references. The main result of
this paper, Theorem \ref{T-Preserva-mesura} below,  provides  a criterion to establish the local
non-integrability of real planar measure preserving maps in terms of non existence of local first
integrals of class $\mathcal{C}^m$, for certain $m\in\N$, near an \emph{elliptic} fixed point
(that is, a fixed point such that the eigenvalues of the associated linear part lie in the unit
circle, but excluding the values $\pm 1$).

We will say that a planar map is $\mathcal{C}^m$-\emph{locally integrable at an elliptic fixed
point $p$} if it does exist a neighborhood $\U$ of $p$ and  a locally non-constant real valued
function $V\in{\mathcal{C}}^m(\U)$, with $m\geq 2$, (called first integral) such that
$V(F(x))=V(x),$ all the level curves $\{V=h\}\cap \U$ are closed curves surrounding $p$ and,
moreover,  $p$ is an isolated non-degenerate critical point of $V$ in~$\U$.

Prior to state the main result, we recall that a map $F$ defined on $\U$, an open set of $\R^2$,
preserves an  absolutely continuous measure with respect the Lebesgue's one  with non-vanishing
density $\nu$,  if $m(F^{-1}(B))=m(B)$ for any measurable set $B$, where $ m(B)=\int_{B}
\nu(x,y)\,dxdy$, and $\left.\nu\right|_{\mathcal{U}}\ne0$. For the sake of simplicity, in this
paper sometimes we will refer these maps simply as   \emph{measure preserving maps}.

When  the eigenvalues $\lambda,\bar \lambda=1/\lambda$ of the linear
part of a $\mathcal{C}^{1}$-planar map $F$ at an elliptic fixed
point $p \in\R^2$ are not roots of unity of order $\ell$ for
$0<\ell\leq k$ we will say that $p$ is {\it not $k$-resonant}.
Recall that for not $k$-resonant elliptic fixed points  a
$\mathcal{C}^{k}$-map, $F$, is locally conjugated to its {\it
Birkhoff normal form} plus some remainder terms, see \cite{AP}:
\begin{equation}\label{E-FN}
F_{B}(z)=\lambda z\left(1+\sum\limits_{j=1}^{[(k-1)/2]} B_j (z\bar{z})^j\right)+o(|z|^k),
\end{equation}
where $z=x+iy$, and $[\cdot]$ denotes the integer part.  It is
well-known  that near a locally integrable elliptic point  the first
non-vanishing \emph{Birkhoff constant} $B_n$, if  exists, must be
purely imaginary.  We recall a proof of this fact in
Lemma~\ref{L-dynamicaFN-gen}.

The main result of this paper is the following theorem:

\begin{teo}\label{T-Preserva-mesura}  Let $F$ be  a  $\mathcal{C}^{2n+2}$-planar map defined on an open set
$\U\subseteq\R^2$ with  an elliptic   fixed point $p$,  not $(2n+1)$-resonant, and such that its
first non-vanishing Birkhoff constant is  $B_n= i\,b_n$,  for some $0<n\in\N$ and
$b_n\in\R\setminus\{0\}.$ Moreover, assume that $F$ is a measure preserving map with a
non-vanishing density $\nu\in\mathcal{C}^{2n+3}$. If, for an unbounded sequence of natural numbers
$\{N_k\}_k$, $F$ has finitely many $N_k$-periodic points in $\U$ then it is not $\mathcal{C}^{
2n+4}$-locally integrable at~$p$.
\end{teo}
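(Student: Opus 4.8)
The plan is to argue by contradiction: assume $F$ is $\mathcal{C}^{2n+4}$-locally integrable at $p$, with first integral $V\in\mathcal{C}^{2n+4}(\U)$ having $p$ as a non-degenerate critical point whose nearby level sets are closed curves around $p$. The first step is to produce a Lie symmetry for $F$. Since $F$ preserves the measure with density $\nu\in\mathcal{C}^{2n+3}$ and also preserves $V$, the vector field $X$ defined (up to sign and normalization) by $\nu^{-1}\nabla V$ rotated by ninety degrees, i.e. $X=\frac{1}{\nu}\bigl(-V_y,\,V_x\bigr)$, is tangent to the level curves of $V$, is Hamiltonian-like with respect to the density, and one checks that $F$ maps orbits of $X$ to orbits of $X$; because $F$ fixes $V$ and preserves $\nu\,dx\wedge dy$, $F$ is in fact a symmetry of the flow of $X$ in the strong sense that $DF\cdot X = (X\circ F)$, i.e. $F$ conjugates the flow $\varphi_t$ of $X$ to itself. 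The field $X$ is $\mathcal{C}^{2n+2}$ and has at $p$ a non-degenerate center (because $p$ is a non-degenerate critical point of $V$ and the level sets are ovals), so $p$ is surrounded by a period annulus $\mathcal{P}$ of $X$, possibly after shrinking $\U$.

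The second step exploits the regularity of the period function on the whole period annulus — the result the abstract flags as interesting in itself and which I would invoke as an already-established lemma from earlier in the paper. Let $T:\mathcal{P}\to(0,\infty)$ be the period function of the center; it is $\mathcal{C}^{2n+1}$ (or at least as smooth as needed) on $\mathcal{P}$, and at $p$ its value is $2\pi/\omega$ where $\lambda=e^{i\omega}$ with $\omega=\arg\lambda$. On the other hand, because $F$ is a symmetry of the flow, on each periodic orbit $\gamma_h=\{V=h\}$ the map $F$ acts as the time-$\tau(h)$ map of the flow for some $\tau(h)$, and the rotation number of $F$ restricted to $\gamma_h$ equals $\tau(h)/T(h)\pmod 1$. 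Comparing with the Birkhoff normal form \eqref{E-FN}: since the first non-vanishing Birkhoff constant is $B_n=i\,b_n$ with $b_n\ne0$, the rotation number of $F$ on the invariant curve at "radius" $r$ behaves like $\frac{\omega}{2\pi}+\frac{b_n}{2\pi}r^{2n}+o(r^{2n})$, so it is a \emph{non-constant} analytic-type (in fact $\mathcal{C}^2$) function of $h$ near $p$, strictly monotone in $h$ on one side. Consequently the rotation number function $\rho(h)=\tau(h)/T(h)$ is non-constant and continuous on the period annulus, hence takes every value in some nondegenerate interval $(\rho_0,\rho_1)$.

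The third step derives the contradiction with the finiteness of periodic points. For any rational value $q/N_k\in(\rho_0,\rho_1)$ in lowest terms, the level set $\gamma_h$ on which $\rho(h)=q/N_k$ is an entire invariant curve of $F$ consisting of $N_k$-periodic points (every point of an invariant oval on which $F$ is conjugate to a rotation by a rational angle $q/N_k$ is periodic with period dividing $N_k$); since $\{N_k\}$ is unbounded, for $k$ large enough $q/N_k$ lands in $(\rho_0,\rho_1)$, producing a whole curve of $N_k$-periodic points of $F$ in $\U$ — infinitely many — contradicting the hypothesis. Hence $F$ is not $\mathcal{C}^{2n+4}$-locally integrable at $p$.

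The main obstacle, and the technical heart of the argument, is the second step: upgrading "$F$ preserves $V$ and the measure" to the rigid statement that $F$ is genuinely a time-map of the flow of $X$ on each oval with a \emph{well-defined and regular} rotation number, and matching that regularity (up to the endpoint $p$) with the period function's regularity so that $\rho(h)$ is continuous on the closed-up annulus and its limit at $p$ is governed by the Birkhoff constants. This is where the previously-proven regularity-of-the-period-function result is indispensable: without continuity of $T(h)$ (and of $\tau(h)$) up to the center, one cannot conclude that $\rho$ sweeps an interval containing infinitely many rationals with unbounded denominators, and one also needs the loss-of-derivatives bookkeeping ($\mathcal{C}^{2n+4}$ for $V$, $\mathcal{C}^{2n+3}$ for $\nu$, $\mathcal{C}^{2n+2}$ for $X$, and the not-$(2n+1)$-resonance to make the Birkhoff normal form and its leading term meaningful) to line up exactly.
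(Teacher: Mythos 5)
Your global strategy coincides with the paper's: build the Lie symmetry $X=\frac1\nu(-V_y,V_x)$, observe that $F$ restricted to each oval $\{V=h\}$ is conjugate to a rotation with rotation number $\theta(h)=\tau(h)/T(h)$, show that $\theta$ is non-constant near $h_0=V(p)$, and then harvest a whole oval of $N_k$-periodic points wherever $\theta$ takes a rational value with denominator $N_k$, contradicting the finiteness hypothesis. The gap is in the step you yourself call the technical heart. You assert that $B_n=i\,b_n\ne0$ forces $\theta$ to behave like $\frac{\omega}{2\pi}+\frac{b_n}{2\pi}r^{2n}+o(r^{2n})$ ``by comparison with the Birkhoff normal form'', but this is precisely the claim that needs proof, and nothing in your write-up supplies it. Three obstacles separate the normal form from that expansion: the invariant ovals $\{V=h\}$ are not the circles $|z|=r$ of the normal-form coordinates; the rotation number of the restricted circle map is a dynamical invariant (an average of the angular displacement against the invariant measure on the oval), not the pointwise angular increment; and the normal form controls $F$ only up to a remainder $o(|z|^{2n+1})$, which must be shown not to swamp the term $b_n r^{2n}$ on the actual ovals. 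You also attribute the role of the period-function regularity theorem to the wrong place: mere continuity of $T$ and $\tau$ up to the center is not where the difficulty lies (continuity away from $p$ already follows from the implicit function theorem and suffices to get a rotation interval once non-constancy is known).

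The paper establishes the non-constancy claim by contraposition, and that is where the listed machinery is actually consumed. If $\theta(h)\equiv\theta$ were constant, one upgrades $X$ to the isochronous Lie symmetry $Y=T\cdot X$ --- this is where Theorem \ref{T-regularitat-periode} is indispensable, to guarantee $Y\in\mathcal{C}^{2n+2}$ \emph{including at} $p$ --- so that $F=\widetilde{\varphi}(\widetilde{\tau},\cdot)$ for a single constant time $\widetilde{\tau}$, and the Bochner average $\Phi(q)=\int_0^1 \mathrm{e}^{-DY(p)s}\,\widetilde{\varphi}(s,q)\,ds$ then conjugates $F$ in class $\mathcal{C}^{2n+2}$ to its linear part, all of whose Birkhoff constants vanish; since the vanishing of the first non-zero Birkhoff constant is invariant under $\mathcal{C}^{2n+2}$-conjugation, this contradicts $b_n\ne0$. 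Your asymptotic-expansion route could likely be completed (invariance of the quadratic part of $V$ under $z\mapsto\lambda z$ forces the ovals to be circles to leading order in normal-form coordinates, after which one averages the displacement and controls the remainder), but as written it is an assertion rather than an argument, and it replaces the one genuinely hard step of the proof.
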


Our proof uses some of the ideas presented by G.~Lowther in \cite{L} for explaining the
non-integrability of the Cohen map. As we will see, our result has also several applications for
proving non-smooth integrability of several rational difference equations.

One of the main ingredients in our proof of
Theorem~\ref{T-Preserva-mesura}  is that any integrable measure
preserving map has an associated vector field $X$, called a
\emph{Lie Symmetry}, such that $F$ can be expressed in terms of the
flow of $X$, see Section \ref{SS-prova} for further details. As we
will see, to proceed with our approach, from this Lie symmetry we
need to construct another one, say $Y$, having an isochronous
center. Our construction of this  vector field $Y$ is based on the
study of the regularity of the so called {\it period function} in a
neighborhood of a non-degenerate center. Let us recall its
definition.

Let $p$ be a non-degenerate center of a smooth vector field $X$, that is such that $DX(p)$ has
eigenvalues $\pm \,i \omega$ with $0\ne\omega\in\R$. Let  $\V$ be the largest neighborhood of $p$
such that $\V\setminus\{p\}$  is foliated by periodic orbits. This set is usually called {\it
period annulus}. Then for all $(x,y)\in\V\setminus\{p\}$, the function $T(x,y)$ giving the
(minimal) period of the closed orbit passing through $(x,y)$ can be extended continuously to $p$
as $T(0,0)=2\pi/\omega$. As usual, we will call this function  $T$, defined on the whole set $\V$,
the {\it period function} of $X$ on $\V$.

The regularity of the period function on $\V$ for non-degenerate
centers of $\mathcal{C}^\infty$ or analytic  planar vector field is
known. It coincides, in the whole set $\V$, with the regularity of
the corresponding vector field, see~\cite{V}. In next result we show
that this is no more true for $\mathcal{C}^k$-vector fields,
$k\in\N$.

\begin{teo}\label{T-regularitat-periode}
Let $X$ be a $\mathcal{C}^k$-vector field with $1\le k\in\N\cup\{\infty,\omega\}$ with a
non-degenerate center $p$, and let $\V$ be its period annulus. Then the period function $T$ is of
class $\mathcal{C}^{k}$ on $\V\setminus\{p\}$ and, at $p$, it is of class~$\mathcal{C}^{k-1},$
where  for the sake of notation $\infty-1=\infty$ and $\omega-1=\omega$. Moreover, in general, the
regularity of $T$ at $p$ can not be improved.
\end{teo}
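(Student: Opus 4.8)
The plan is to reduce the statement to a normal form and then treat the regularity on $\V\setminus\{p\}$ and at $p$ separately. First I would note that the claim on $\V\setminus\{p\}$ is essentially classical and local: near any periodic orbit $\gamma$ one can take $\mathcal{C}^k$ coordinates (for instance a transversal section together with the time parametrization along the flow, which is $\mathcal{C}^k$ in the initial condition since $X\in\mathcal{C}^k$) in which the return time to the section is given by an implicit equation $\Phi(s,t)=0$ with $\Phi$ of class $\mathcal{C}^k$ and $\partial_t\Phi\ne0$; the implicit function theorem then yields $T\in\mathcal{C}^k$ on a neighborhood of $\gamma$ in $\V\setminus\{p\}$. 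So the core of the theorem is the behavior at the center $p$, and this is where the loss of one derivative appears.

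For the regularity at $p$, I would first bring $X$ to a convenient form. After a linear change we may assume $DX(p)$ is the rotation $\omega(-y\partial_x+x\partial_y)$; passing to polar-like coordinates, or better using the classical trick of dividing by the (nonvanishing near $p$) radial component, the orbits near $p$ are graphs $r=r(\theta;\rho)$ solving a scalar $\mathcal{C}^k$ ODE $dr/d\theta = R(\theta,r)$ with $R(\theta,0)=0$, and the period is obtained by integrating $dt/d\theta$ over $\theta\in[0,2\pi]$ along the corresponding solution. The period function of $X$ then has the schematic form
\begin{equation*}
T(\rho)=\int_0^{2\pi} \Psi\big(\theta, r(\theta;\rho)\big)\,d\theta,
\end{equation*}
where $\Psi$ is $\mathcal{C}^k$ and $r(\theta;\rho)$ is the solution with $r(0;\rho)=\rho$. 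Standard smooth dependence on initial conditions gives that $\rho\mapsto r(\cdot;\rho)$ is $\mathcal{C}^k$ in $\rho$, hence $T$ is $\mathcal{C}^k$ as a function of $\rho$ for $\rho>0$; the issue is only at $\rho=0$, and since the center is non-degenerate the natural ``square'' variable is $u=\rho^2$ (equivalently, the value of a quadratic first-integral-like quantity $x^2+y^2$), because the map is invariant under $\rho\mapsto-\rho$ at the level of even functions. Writing $T$ as a function of $u$, I would show that differentiating $k-1$ times under the integral sign is legitimate and produces a continuous function of $u$ up to $u=0$, while the $k$-th derivative involves $\partial_r^k\Psi$ composed with $r(\theta;\rho)$, which is merely continuous (not differentiable) because $X$ is only $\mathcal{C}^k$ — this is exactly the one-derivative loss. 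A clean way to organize this is: expand the solution $r(\theta;\rho)$ in $\rho$ with a $\mathcal{C}^1$ (in $\rho$, i.e. the last coefficient only continuous) Taylor remainder, substitute into the integral, and integrate term by term; the constant term gives $2\pi/\omega=T(0)$, and the remainder shows $T\in\mathcal{C}^{k-1}$ but in general not $\mathcal{C}^k$ at $p$.

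Finally, for the sharpness (``the regularity at $p$ cannot in general be improved'') I would exhibit an explicit $\mathcal{C}^k$ (and not $\mathcal{C}^{k+1}$) vector field whose period function is exactly $\mathcal{C}^{k-1}$ at the center. A convenient family is a Hamiltonian or a reversible system of the form $\dot x=-y,\ \dot y=x+ g(x)$ (or a perturbation of the harmonic oscillator by a single monomial-type term carrying the minimal regularity, e.g. $|x|^{k}x$ or $x^{k+1}$ depending on parity), for which the period function can be computed or estimated via an Abelian-integral formula and seen to have a term behaving like $u^{k-1}$ times a non-$\mathcal{C}^1$ factor near $u=0$; choosing the nonlinearity to be of class exactly $\mathcal{C}^k$ forces the period function to be of class exactly $\mathcal{C}^{k-1}$. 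I expect the main obstacle to be the careful bookkeeping of derivatives in the step at $p$: one must track precisely how many times one may differentiate $T(u)=\int_0^{2\pi}\Psi(\theta,r(\theta;\rho))\,d\theta$ under the integral, verify uniform convergence of the differentiated integrals as $u\to0^+$, and correctly account for the change of variable $\rho^2=u$ (which could a priori cost an extra half-derivative but does not, thanks to the evenness forced by the non-degeneracy of the center). The sharpness example, while conceptually easy, also requires a genuine computation showing the $u^{k-1}$-type singular term does not vanish.
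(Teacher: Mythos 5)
Your reduction of the problem to the behaviour at $p$ and your overall strategy (polar coordinates, solve $dr/d\theta$, write $T$ as $\int_0^{2\pi} dt/d\theta$) is the same as the paper's, and the claim on $\V\setminus\{p\}$ is handled identically (implicit function theorem plus regularity of the flow). But at $p$ your plan has a genuine gap exactly at the point you wave at with ``the evenness forced by the non-degeneracy of the center.'' What must be proved is that the expansion of $T$ along a ray contains only \emph{even} powers of $\rho$ up to order $k-1$, plus a remainder that is $o(\rho^{k-1})$ uniformly in the angle: a single surviving term $c\,\rho^{2j+1}=c\,(x^2+y^2)^{j+1/2}$ would already destroy $\mathcal{C}^{2j+1}$-regularity of $T$ as a function of $(x,y)$, so without this structure you cannot even conclude $T\in\mathcal{C}^1$ at $p$. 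With a generic $\mathcal{C}^k$ right-hand side $\Psi(\theta,r(\theta;\rho))$ there is no reason for odd powers to cancel. The paper obtains this structure by first conjugating $X$ to its Poincar\'e normal form $\dot z=iz(\omega+\sum_j a_{2j}(z\bar z)^j)+o(|z|^k)$, for which $\dot r=o_\theta(r;k)$ (so $r(\theta;\rho,\alpha)=\rho+o(\rho;k)$) and $\dot\theta=\omega+\sum_j a_{2j}r^{2j}+o_\theta(r;k-1)$; the period integral then visibly yields $2\pi/\omega+\sum_j T_{2j}\rho^{2j}+o(\rho^{k-1})$, i.e.\ a polynomial in $x^2+y^2$ plus a term $(x^2+y^2)^{(k-1)/2}H(x,y)$ with $H\to0$, which is $\mathcal{C}^{k-1}$ and flat at the origin. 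Note also that the loss of one derivative is located there concretely: the remainder of $\dot\theta$ is one order worse than that of $\dot r$ because the angular component of the field gets divided by $r$; your alternative accounting via ``$\partial_r^k\Psi$ is merely continuous'' is not substantiated and is not where the $k-1$ actually comes from.

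The sharpness part is also not established. You propose a Hamiltonian perturbation $\dot x=-y$, $\dot y=x+g(x)$ and defer to ``a genuine computation'' via Abelian integrals; besides being unverified (the relevant coefficient could vanish), your candidate $g(x)=x^{k+1}$ is analytic and so cannot witness the $\mathcal{C}^k$-but-not-$\mathcal{C}^{k-1}$ phenomenon at all. The paper sidesteps all of this with the explicit field $\dot x=-y\left(1+(x^2+y^2)^{a}\right)$, $\dot y=x\left(1+(x^2+y^2)^{a}\right)$, whose orbits are circles and whose period function is exactly $2\pi/\left(1+(x^2+y^2)^{a}\right)$, so both the regularity of the field and of $T$ can be read off directly for a suitable choice of $a$ depending on the parity of $k$. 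I would adopt an example of this closed-form type rather than one requiring an Abelian-integral estimate.
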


Notice that since the period function $T$ of a non-degenerate center
on its period annulus is clearly a first integral for the
corresponding vector field, a direct consequence of the above result
is:
\begin{corol}\label{C-first-integral}
Let $p$ be  a non-degenerate center of a $\mathcal{C}^k$-vector
field, $1\le k\in\N\cup\{\infty,\omega\}$, and let $\V$ be its
period annulus. Then the vector field has a
$\mathcal{C}^{k-1}$-first integral on $\V$.
\end{corol}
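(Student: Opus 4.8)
\textit{Proof proposal.}
The plan is that essentially everything is already contained in Theorem~\ref{T-regularitat-periode}. Indeed, the period function $T$ is by construction constant along each periodic orbit of $X$, hence it is a first integral of $X$ on its period annulus $\V$; and by Theorem~\ref{T-regularitat-periode} it belongs to $\mathcal{C}^{k}(\V\setminus\{p\})$ and to $\mathcal{C}^{k-1}$ at $p$, so $T\in\mathcal{C}^{k-1}(\V)$. This already proves the corollary whenever $p$ is not an isochronous center, since then $T$ is non-constant. To cover the isochronous case as well (and to keep a uniform argument) I would replace $T$ by a suitable weighted average along the flow, still defined globally on $\V$ by an explicit formula.

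Let $\varphi_t$ denote the flow of $X$; since $X\in\mathcal{C}^k$, the map $(t,x)\mapsto\varphi_t(x)$ is of class $\mathcal{C}^k$ on $\R\times\V$ (analytic when $k=\omega$). Fix a polynomial $g\colon\R^2\to\R$ and set
\[
V(x):=\int_0^{T(x)}g\big(\varphi_t(x)\big)\,dt,\qquad x\in\V.
\]
Then I claim: (i) $V$ is a first integral of $X$: since $T(\varphi_s(x))=T(x)$, the change of variables $u=t+s$ and the $T(x)$-periodicity of $u\mapsto\varphi_u(x)$ give $V(\varphi_s(x))=\int_{s}^{\,s+T(x)}g(\varphi_u(x))\,du=\int_0^{T(x)}g(\varphi_u(x))\,du=V(x)$. (ii) $V\in\mathcal{C}^{k-1}(\V)$: the auxiliary map $(b,x)\mapsto\int_0^{b}g(\varphi_t(x))\,dt$ is of class $\mathcal{C}^{k}$ on $\R\times\V$ (differentiation under the integral sign, using that $\varphi$ is $\mathcal{C}^k$ and $g$ is $\mathcal{C}^\infty$), and composing it with $x\mapsto(T(x),x)$, which is $\mathcal{C}^{k-1}$ on $\V$ by Theorem~\ref{T-regularitat-periode}, yields $V$. (iii) $g$ can be chosen so that $V$ is non-constant: two points $q\neq q'$ of $\V\setminus\{p\}$ lying on different periodic orbits produce, via $u\mapsto\varphi_u(\cdot)$, the pushforwards of Lebesgue measure on $[0,T(\cdot)]$ supported on those two disjoint orbits; hence some continuous function, and therefore (by density of polynomials in $\mathcal{C}(K)$ on a compact $K\subset\V$ containing $q$ and $q'$) some polynomial $g$, separates the corresponding integrals, so that $V(q)\ne V(q')$. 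When $p$ is not isochronous one may of course simply take $g\equiv1$, recovering $V=T$.

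The whole weight of the statement thus rests on the regularity assertion of Theorem~\ref{T-regularitat-periode} at the center $p$: exactly one derivative is lost there, and none elsewhere on $\V$ nor in the flow/integral part, so the expected main obstacle, namely controlling $T$ near $p$, has already been handled in that theorem. The only additional point worth isolating is that the naive candidate $V=T$ fails precisely for isochronous centers, which is why the flow average above is introduced; the verification of (i)--(iii) for it is routine.
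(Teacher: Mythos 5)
Your proposal is correct, and its core is the same as the paper's: the paper proves the corollary in one line by taking the first integral to be the period function $T$ itself and invoking Theorem~\ref{T-regularitat-periode} for the regularity ($\mathcal{C}^k$ on $\V\setminus\{p\}$, $\mathcal{C}^{k-1}$ at $p$). Where you genuinely diverge is in noticing that this candidate degenerates when the center is isochronous: then $T$ is constant on $\V$ and is a first integral only in the vacuous sense, which cannot be what the statement intends (otherwise any constant would do and the corollary would say nothing). The paper passes over this case in silence; your flow-averaged function $V(x)=\int_0^{T(x)}g(\varphi_t(x))\,dt$ repairs it, and your verification is sound: the invariance in (i) is the standard change of variables plus $T(x)$-periodicity of $t\mapsto\varphi_t(x)$; the regularity in (ii) correctly isolates the only loss of smoothness in the factor $x\mapsto T(x)$, since $(b,x)\mapsto\int_0^b g(\varphi_t(x))\,dt$ is indeed $\mathcal{C}^k$ jointly when $\varphi\in\mathcal{C}^k$ and $g$ is polynomial; and the separation argument in (iii) works because the two orbit measures have disjoint compact supports, so some continuous function, hence by Stone--Weierstrass some polynomial, distinguishes them. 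In short, your argument buys a non-constant first integral uniformly in all cases at the cost of a slightly longer construction, whereas the paper's argument is shorter but, read strictly, only yields a non-trivial first integral when the center is not isochronous.
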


In fact, it is already known that if $p$ is a center, no necessarily
non-degenerate,  of a $\mathcal{C}^k$-vector field
($k\in\N\cup\{\infty\}$), then there exists a $\mathcal{C}^k$-first
integral in a small enough neighborhood of $p$, see~\cite{MS}.
Nevertheless, although the corollary gives a much weaker result, our
approach is different to the one of~\cite{MS}.

The second ingredient is a method for checking when the discrete dynamical system generated by a
map $F:\R^M\to\R^M$ has finitely many $K$-periodic points. Or, equivalently, when the system
\[
{\bf x}_1-F({\bf x}_0)={\bf 0},\,{\bf x}_2-F({\bf x}_1)={\bf
0},\ldots,{\bf x}_{K-1}-F({\bf x}_{K-2})={\bf 0},\,{\bf x}_0-F({\bf
x}_{K-1})={\bf 0},\,
\]
has finitely many real solutions. Notice that the above system can
be written in a compact for as $\widehat G({\bf y})={\bf 0},$ where
${\bf y}=({\bf x}_0,{\bf x}_1,\ldots,{\bf x}_{K-1})\in\R^N,$ for
some map $\widehat G:\R^N\rightarrow\R^N$, where $N=K\,M.$ We prove
the following result, that can be applied in case that all solutions
of the system $\widehat G({\bf y})={\bf 0}$ are also solutions of a
new system, $G({\bf y})={\bf 0}$, for some polynomial map
$G:\R^N\rightarrow\R^N$.

\begin{teo}\label{teog}
Let $G:\C^N\rightarrow\C^N$ be a polynomial map of degree $d$. Let $G_d$ denote the homogenous map
corresponding to the degree $d$ terms of $G$. If ${\bf y}={\bf 0}$ is the unique solution in
$\C^N$ of the homogeneous system $G_d({\bf y})={\bf 0}$, then the polynomial system  $G({\bf
y})={\bf 0}$ has finitely many solutions.
\end{teo}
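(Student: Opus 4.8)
The plan is to use a dimension/degree argument via Bézout together with the fact that projective varieties of positive dimension must intersect the hyperplane at infinity.

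First I would homogenize. Introduce a new variable $t$ and, for each component $G_j$ of $G$ (of degree $\le d$), form the homogenization $\widehat{G}_j(\mathbf{y},t)\in\C[y_1,\ldots,y_N,t]$, a homogeneous polynomial of degree exactly $d$, so that $\widehat{G}_j(\mathbf{y},1)=G_j(\mathbf{y})$ and $\widehat{G}_j(\mathbf{y},0)=(G_d)_j(\mathbf{y})$. These $N$ forms define a projective variety $Z\subseteq\mathbb{P}^N(\C)$. The affine solution set $\{G(\mathbf{y})=\mathbf{0}\}$ embeds in $Z$ as $Z\cap\{t\ne 0\}$, while $Z\cap\{t=0\}$ is exactly the projectivization of the solution set of the homogeneous system $G_d(\mathbf{y})=\mathbf{0}$ in $\mathbb{P}^{N-1}(\C)$.

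Second, I would invoke the hypothesis: the only solution of $G_d(\mathbf{y})=\mathbf{0}$ in $\C^N$ is $\mathbf{y}=\mathbf{0}$, hence $G_d$ has no nonzero zero and so $Z\cap\{t=0\}=\varnothing$; that is, $Z$ is contained in the affine chart $\{t\ne 0\}$ and $Z$ is therefore a \emph{projective} variety that is entirely affine. The key step is now the standard fact from algebraic geometry that a closed subvariety of $\mathbb{P}^N(\C)$ of dimension $\ge 1$ must meet every hyperplane (projective varieties of positive dimension are not contained in any affine chart — equivalently, the only complete quasi-affine varieties are the finite ones). Applying this with the hyperplane $\{t=0\}$ forces $\dim Z\le 0$, i.e. $Z$ is a finite set of points. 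Consequently $\{G(\mathbf{y})=\mathbf{0}\}\cong Z\cap\{t\ne0\}$ is finite, which is the claim.

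I expect the main (really the only) obstacle to be providing a clean, self-contained justification of the geometric input — that a positive-dimensional projective variety cannot avoid a hyperplane — rather than merely citing it. One concrete route avoiding heavy machinery: if $\{G=\mathbf{0}\}$ were infinite it would contain an irreducible curve $C$; parametrize a branch, or use elimination theory (resultants) to project, and track how points of $C$ escape to infinity, producing a nonzero zero of $G_d$ as a limit direction — contradicting the hypothesis. Alternatively, one can argue directly with Bézout's theorem or with the projective dimension theorem: the $N$ hypersurfaces $\{\widehat{G}_j=0\}$ in $\mathbb{P}^N$ each have dimension $N-1$, so their intersection $Z$ has every component of dimension $\ge 0$; if some component had dimension $\ge 1$ it would meet the hyperplane at infinity (again the dimension theorem: a positive-dimensional variety meets any hyperplane), contradicting $Z\cap\{t=0\}=\varnothing$. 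Either way the finiteness of $\{G(\mathbf{y})=\mathbf{0}\}$ follows; I would present whichever version keeps the prerequisites lightest for the paper's intended readership.
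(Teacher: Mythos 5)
Your proposal is correct and follows essentially the same route as the paper's own proof: homogenize $G$ to obtain a projective variety, observe that the hypothesis on $G_d$ means this variety misses the hyperplane at infinity, and invoke the projective dimension theorem (a positive-dimensional projective variety meets every hyperplane) to conclude it is finite. Even your suggested alternatives (Chevalley-type projection arguments) mirror the second proof sketched in the paper, so there is nothing substantive to add.
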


Although we have not found the above result in the literature, most probably it is a folklore
result. In any case, we sketch a proof in Section~\ref{SS-continua}.

Notice also, that by Bezout's Theorem, we also know that when the hypotheses of the theorem are
satisfied  the maximum number of solutions of  $G({\bf y})={\bf 0}$ is $d^N.$ Finally observe that
applying Theorem~\ref{teog} when $G$ is linear, that is $d=1$ and $G({\bf y})=A\,{\bf y}+{\bf b}$,
for some $N\times N$ matrix $A$, then $G_d({\bf y})=A\,{\bf y}$ and the condition  that $G_d({\bf
y})={\bf 0}$ if and only if ${\bf y}={\bf 0}$ reduces to $\det(A)\ne0.$ Therefore the above result
can be thought as a natural extension of the well-known result: system $A\,{\bf y}+{\bf b}={\bf
0}$ has finitely many solutions (in fact, 0 or 1) if $\det A\ne0.$

As a first application of Theorems~\ref{T-Preserva-mesura} and~\ref{teog} we recover the result of
G.~Lowther about the non-integrability of the Cohen map
\begin{equation}\label{E-Cohen}
    F(x,y)=\left(y,-x+\sqrt{y^2+1}\right),
\end{equation}
exposed in \cite{L}.
\begin{teo}\label{T-Cohen}
The Cohen map is not { $\mathcal{C}^6$}-locally integrable at its
fixed point $\left(\sqrt{3}/3,\sqrt{3}/3\right)$.
\end{teo}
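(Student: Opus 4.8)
The plan is to apply Theorem~\ref{T-Preserva-mesura} to the Cohen map~\eqref{E-Cohen}, so the work splits into verifying its hypotheses one by one. First I would check that $F$ is a measure preserving map: computing the Jacobian of~\eqref{E-Cohen} one readily sees $\det DF \equiv 1$ (in fact, writing $F$ as a composition of an involution and an area-preserving shear makes this transparent), so $F$ preserves the Lebesgue measure, i.e. $\nu\equiv 1$, which is trivially $\mathcal{C}^\infty$. Next I would locate the fixed point: solving $x=y$, $y=-x+\sqrt{y^2+1}$ gives $2x=\sqrt{x^2+1}$, hence $x=y=\sqrt{3}/3$, and one checks the eigenvalues of $DF$ at this point lie on the unit circle and are not $\pm 1$, so $p=(\sqrt3/3,\sqrt3/3)$ is elliptic.

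The analytic core of the proof is the Birkhoff normal form computation. After translating $p$ to the origin and conjugating the linear part to a rotation $z\mapsto \lambda z$ (writing $z=x+iy$ in suitable coordinates), one must expand $F$ to order $5$ or so and read off the first Birkhoff constant $B_1$ from the normal form~\eqref{E-FN}. The expected outcome, matching Lowther's observation, is that $B_1$ is nonzero and purely imaginary, $B_1=i\,b_1$ with $b_1\ne 0$; this is precisely the situation $n=1$ of Theorem~\ref{T-Preserva-mesura}, which then asks for $F$ to be $\mathcal{C}^{2n+2}=\mathcal{C}^4$ (it is analytic, so fine), $\nu\in\mathcal{C}^{2n+3}=\mathcal{C}^5$ (again fine), and $p$ not $(2n+1)=3$-resonant (follows from computing $\lambda$ and checking it is not a root of unity of order $\le 3$), and the conclusion will be non-$\mathcal{C}^{2n+4}=\mathcal{C}^6$-local integrability — exactly the statement of Theorem~\ref{T-Cohen}.

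The remaining hypothesis is the finiteness of the periodic points along an unbounded sequence $\{N_k\}$, and this is where Theorem~\ref{teog} enters. The Cohen map comes from the second order difference equation $x_{n+2}=-x_n+\sqrt{x_{n+1}^2+1}$; the $N$-periodicity system $x_1-F(x_0)=0,\dots,x_0-F(x_{N-1})=0$ involves square roots, so it is not polynomial, but squaring the radical relations $x_{n+2}+x_n = \sqrt{x_{n+1}^2+1}$ into $(x_{n+2}+x_n)^2 = x_{n+1}^2+1$ produces a polynomial system $G(\mathbf{y})=\mathbf{0}$ of degree $d=2$ in $\R^N$ (with $N=$ number of scalar unknowns) whose solution set contains all genuine $N$-periodic orbits. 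One then inspects the degree-$2$ homogeneous part $G_d$: it consists of the quadratic forms $(x_{n+2}+x_n)^2 - x_{n+1}^2$, and I would show that the only common zero of these is $\mathbf{y}=\mathbf{0}$ — this is a finite linear-algebra / elementary-inequality check on a cyclic system of quadrics. Granting that, Theorem~\ref{teog} gives finitely many solutions of $G=\mathbf{0}$, hence finitely many $N$-periodic points of $F$, for every $N$ for which the homogeneous check succeeds; taking the $N_k$ to be all such $N$ (which should be all $N\ge 1$, or at least an unbounded set) supplies the last hypothesis.

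The main obstacle I anticipate is the explicit Birkhoff constant computation: one must correctly diagonalize the linear part, carry out the coordinate change to the complex normal form through the relevant order, and extract $B_1$, keeping track of enough terms that the purely-imaginary-and-nonzero conclusion is rigorous rather than merely numerical; a secondary but genuine obstacle is verifying that the homogeneous system $G_d(\mathbf{y})=\mathbf{0}$ has only the trivial zero, since the squaring step can in principle introduce spurious components at infinity that one has to rule out. Once those two computations are done, Theorems~\ref{T-Preserva-mesura} and~\ref{teog} assemble the result with no further work. \qed
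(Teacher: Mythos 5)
Your overall architecture is exactly that of the paper: Theorem~\ref{T-Cohen} is obtained by feeding the Cohen map into Theorem~\ref{T-Preserva-mesura}, with the Birkhoff constant supplied by a direct normal form computation (the paper's Lemma~\ref{L-B1-cohen} gives $B_1=i\,135/256$) and the finiteness of periodic points obtained from the squared, polynomial recurrence $(x_n+x_{n+2})^2-x_{n+1}^2-1=0$ via Theorem~\ref{teog}, in the form of Proposition~\ref{L-sistemaN-OP}. The area preservation, the location of the fixed point, and the non-resonance check are all as you describe.

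The genuine gap is in the step you describe as ``a finite linear-algebra / elementary-inequality check on a cyclic system of quadrics.'' First, the check must be done over $\C^N$ (this is where Theorem~\ref{teog} lives), so inequality or positivity arguments are unavailable. Second, your expectation that the homogeneous system $(x_i+x_{i+2})^2-x_{i+1}^2=0$ (indices mod $N$) has only the trivial common zero for all $N$ is false: for $N$ a multiple of $3$ it has nontrivial solutions (already for $N=3$, taking the branch $x_i+x_{i+2}=-x_{i+1}$ for every $i$ reduces the system to the single equation $x_0+x_1+x_2=0$), which is why the paper's Proposition~\ref{P-Cohen} asserts finiteness only for $N$ not divisible by $3$ --- still an unbounded sequence, so Theorem~\ref{T-Preserva-mesura} applies, but this restriction has to be identified and proved. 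Third, and most importantly, even for those $N$ the verification is not a routine finite check: each quadric factors as $(x_i+x_{i+2}-x_{i+1})(x_i+x_{i+2}+x_{i+1})$, so one must show that \emph{all} $2^N$ cyclic linear systems $A_N(\varepsilon_0,\ldots,\varepsilon_{N-1})\,{\bf x}={\bf 0}$ with $\varepsilon_j\in\{-1,1\}$ are nonsingular, and this must be done uniformly in $N$. The key idea, absent from your proposal, is to observe that all these determinants are congruent modulo $2$ to the single determinant $\det A_N(1,\ldots,1)$, and to compute that one by a Toeplitz-type recurrence (equivalently via Fibonacci numbers mod $2$), obtaining the value $3$ when $N$ is not a multiple of $3$; being odd, all $2^N$ determinants are then nonzero. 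Without an argument of this kind the finiteness hypothesis is not established and your plan does not close.
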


According to   M.~Rychlik and M.~Torgesson \cite{RT},  the question about the integrability of
this map was first conjectured by H.~Cohen and comunicated by C.~de Verdi\`ere to J.~Moser in
1993. Rychlik and Torgesson showed that it has not a first integral given by algebraic functions.
Nowadays this map is considered unlikely to be integrable since numerical explorations show that
it has hyperbolic periodic points and chains of islands of period $14$ and $23$,
see~\cite{L,PABV}.

Our second application covers a wide class of rational difference equations. Consider
\begin{equation}\label{fyx}
    F(x,y)=\left(y,\frac{f(y)}{x}\right),
\end{equation}
where  $f=P/Q$ is a rational map. For the sake of notation,  define $\deg(f)=\deg(P)-\deg(Q)$. Its
fixed points are $p=(\bar{x},\bar{x})$, where $\bar{x}$ are the non-zero solutions of the equation
$f(\bar{x})=\bar{x}^2$ and $p$ is an elliptic point if and only if
$\left|{f'(\bar{x})}/{\bar{x}}\right|<2.$ Moreover~(\ref{fyx}) preserves the measure with
 density  $\nu(x,y)=1/(xy),$ that does not vanish on a neighborhood of the fixed points.

\begin{teo}\label{P_racionals2}
Consider the map (\ref{fyx}), where $f$ is a rational function with
$\deg(f)>2$.  If $p$ is an elliptic fixed point, not
$(2n+1)$-resonant, and such that its first non-vanishing Birkhoff
constant is  $B_n= i\,b_n$, for some $0<n\in\N$ and
$b_n\in\R\setminus\{0\},$ then $F$ is not ${
\mathcal{C}^{2n+4}}$-locally integrable at $p$.
\end{teo}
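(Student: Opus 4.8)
The plan is to reduce Theorem~\ref{P_racionals2} to Theorem~\ref{T-Preserva-mesura}, so that the whole content of the proof is verifying the hypotheses of the latter for the map~(\ref{fyx}). Two of these hypotheses are essentially given in the statement: $p$ is elliptic, not $(2n+1)$-resonant, and has first non-vanishing Birkhoff constant $B_n=i\,b_n$ with $b_n\neq0$. Moreover, as already remarked right before the statement, $F$ preserves the measure with density $\nu(x,y)=1/(xy)$, which is real-analytic (hence $\mathcal{C}^{2n+3}$) and non-vanishing on a neighborhood of any fixed point $p=(\bar x,\bar x)$ with $\bar x\neq0$. Finally, since $f=P/Q$ is rational and $F$ is therefore a rational map, it is $\mathcal{C}^\infty$ (indeed analytic) on a neighborhood of $p$, so the smoothness requirement $\mathcal{C}^{2n+2}$ holds. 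Thus the only nontrivial point left to check is the finiteness of the set of $N_k$-periodic points for an unbounded sequence $\{N_k\}$; once this is established, Theorem~\ref{T-Preserva-mesura} immediately gives that $F$ is not $\mathcal{C}^{2n+4}$-locally integrable at $p$.

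The heart of the argument is therefore the count of periodic points, which I would handle via Theorem~\ref{teog}. Fix $K\in\N$ and write the condition that $(x_0,y_0)$ is a $K$-periodic point of~(\ref{fyx}). Introducing the iterates $x_1,x_2,\dots,x_{K-1}$ of the first coordinate (so that $y_j=x_{j+1}$), the system becomes the cyclic system $x_{j+1}x_{j-1}=f(x_j)$ for $j\in\Z/K\Z$, i.e. $x_{j+1}x_{j-1}Q(x_j)=P(x_j)$. Clearing denominators turns the periodicity conditions $\widehat G=0$ into a polynomial system $G(\mathbf y)=0$ with $\mathbf y=(x_0,\dots,x_{K-1})\in\C^K$; every genuine $K$-periodic point (which automatically has all $x_j\neq0$, since $f(x_j)/x_{j-1}$ must be finite and the fixed point has $\bar x\neq0$, and more generally one checks no coordinate can vanish along a periodic orbit of this map) yields a solution of $G=0$. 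To apply Theorem~\ref{teog} I must identify the top-degree homogeneous part $G_d$ and show its only complex zero is $\mathbf y=\mathbf 0$. Here is where $\deg(f)>2$, equivalently $\deg P>\deg Q+2$, is used: in the $j$-th equation $x_{j+1}x_{j-1}Q(x_j)-P(x_j)=0$, the total degree is $\max(\deg Q+2,\deg P)=\deg P=:d$ (the same for every $j$ because of the hypothesis $\deg P>\deg Q+2$... more precisely $\deg f>2$ gives $\deg P\ge \deg Q+3>\deg Q+2$), and the degree-$d$ part is $-P_d(x_j)$ where $P_d$ is the leading monomial of $P$, say $P_d(t)=a\,t^d$ with $a\neq0$. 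Hence $G_d(\mathbf y)=(-a\,x_0^{\,d},-a\,x_1^{\,d},\dots,-a\,x_{K-1}^{\,d})$, whose unique zero in $\C^K$ is the origin. Theorem~\ref{teog} then yields that $G(\mathbf y)=0$ has finitely many complex solutions, a fortiori finitely many real ones, and since the $K$-periodic points of $F$ inject into this solution set, $F$ has finitely many $K$-periodic points.

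Carrying this out for \emph{every} $K\in\N$ (in particular for any unbounded sequence $\{N_k\}$) supplies the last missing hypothesis of Theorem~\ref{T-Preserva-mesura}, and the proof of Theorem~\ref{P_racionals2} is complete. I expect the main obstacle to be purely bookkeeping: making sure that clearing denominators does not accidentally lower the degree of the leading terms or introduce cancellations between the $x_{j+1}x_{j-1}Q(x_j)$ block and the $P(x_j)$ block in the top-degree part — this is exactly what the strict inequality $\deg f>2$ prevents, so the role of that hypothesis should be stated carefully. A secondary point worth a sentence is the claim that along a $K$-periodic orbit no coordinate $x_j$ vanishes, so that the passage from the rational system to the polynomial system $G=0$ does not lose any periodic points; this follows since if some $x_j=0$ then the relation $x_{j+1}x_{j-1}Q(x_j)=P(x_j)$ forces $P(0)=0$ only in degenerate cases, and in any event a periodic orbit through a point with a zero coordinate would have to pass through the line $\{x=0\}$ or $\{y=0\}$ where $F$ (or a suitable iterate) is not defined, contradicting periodicity — a short case check here suffices. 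Everything else is a direct invocation of the already-proved Theorems~\ref{T-Preserva-mesura} and~\ref{teog}.
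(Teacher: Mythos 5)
Your proposal is correct and follows essentially the same route as the paper: verify the measure-preservation and Birkhoff hypotheses of Theorem~\ref{T-Preserva-mesura}, then establish finiteness of $N$-periodic points by clearing denominators to get the cyclic polynomial system $x_{j+1}x_{j-1}Q(x_j)-P(x_j)=0$, whose top-degree homogeneous part is $-a_p x_j^{\deg P}$ precisely because $\deg(f)>2$, so that Theorem~\ref{teog} (in the paper, via Proposition~\ref{L-sistemaN-OP}) applies. The only cosmetic difference is that the paper invokes the difference-equation reformulation Proposition~\ref{L-sistemaN-OP} rather than Theorem~\ref{teog} directly, and does not dwell on the (correct but inessential) remark that genuine periodic orbits inject into the solution set of the polynomial system.
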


It is interesting to notice that when $\deg(f)\le 2$ there are integrable cases at least for
$\deg(f)\in\{-1,0,1,2\}.$ For $k\in\{-1,0,1\}$ it suffices to consider the periodic maps $F$ with
$f(y)=y^k,$ because all rational periodic maps are rationally integrable, see~\cite{CGM}. Other
integrable, non-periodic cases  are  the well-known Lyness map, that corresponds to $f(y)=(a+y)$,
see~\cite{D}, or for $\deg(f)=2$, the map studied by G.~Bastien and M.~Rogalski in \cite{BR},
given by $f(y)=(a-y+y^2),$ which possesses the first integral $V(x,y)=(x^2+y^2-x-y+a)/(xy)$.

In the case with $\deg(f)=2$, we study with more detail the family of maps

\begin{equation}\label{BRgen}
    F(x,y)=\left(y,\frac{A+By+Cy^2}{x}\right),\quad C\ne0,
\end{equation}
that extends the one given in~\cite{BR}. In next result we prove
that in this family integrability and non-integrability coexist.

\begin{propo}\label{familybracfacil}
A map (\ref{BRgen})  having   an elliptic   fixed point $p$,  not
$5$-resonant, is ${\mathcal{C}^{6}}$-locally integrable at $p$ if
and only if $C=1.$ Moreover, when $C=1$ the map has the rational
first integral $V(x,y)=(x^2+y^2+B(x+y)+A)/(xy).$
\end{propo}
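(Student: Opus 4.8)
The plan is to prove the two directions separately. For the easy ``if'' direction, suppose $C=1$; then one checks directly that $V(x,y)=(x^2+y^2+B(x+y)+A)/(xy)$ is a rational first integral of $F$ by substituting $(x,y)\mapsto (y,(A+By+y^2)/x)$ and verifying $V(F(x,y))=V(x,y)$, a routine computation. To conclude local integrability at the elliptic point $p=(\bar x,\bar x)$ in the sense of the paper, I would verify that near $p$ the level curves $\{V=h\}$ are closed curves surrounding $p$ and that $p$ is a non-degenerate critical point of $V$; this follows because $V$ restricted to a neighborhood of $p$ has a strict local extremum there (the Hessian of $V$ at $p$ is definite, which can be read off from the quadratic part, using that $p$ is elliptic, i.e. $|f'(\bar x)/\bar x|<2$).

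For the harder ``only if'' direction, suppose the map~\eqref{BRgen} is $\mathcal{C}^6$-locally integrable at an elliptic fixed point $p$ that is not $5$-resonant; I must show $C=1$. The strategy is to contrapose via Theorem~\ref{P_racionals2}: that theorem says that if $\deg(f)>2$, $p$ is elliptic and not $(2n+1)$-resonant, and the first non-vanishing Birkhoff constant $B_n$ is purely imaginary and nonzero, then $F$ is not $\mathcal{C}^{2n+4}$-locally integrable. Here $f(y)=A+By+Cy^2$ has $\deg(f)=2$, so Theorem~\ref{P_racionals2} does not apply verbatim; instead I expect to invoke the main criterion Theorem~\ref{T-Preserva-mesura} directly, since~\eqref{BRgen} preserves the density $\nu=1/(xy)\in\mathcal{C}^{\infty}$ near $p$, and to supply the finiteness-of-periodic-points input via Theorem~\ref{teog} applied to the periodicity systems arising from~\eqref{BRgen} (these are polynomial after clearing denominators, and their top-degree parts should have only the trivial zero, exactly as in the proof of Theorem~\ref{P_racionals2}). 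Thus the whole ``only if'' direction reduces to a single computation: show that if $C\ne1$ then the first Birkhoff constant $B_1$ of~\eqref{BRgen} at $p$ is of the form $i\,b_1$ with $b_1\ne0$ (here $n=1$, consistent with the $\mathcal{C}^6=\mathcal{C}^{2n+4}$ threshold and the ``not $5$-resonant $=$ not $(2n+1)$-resonant'' hypothesis).

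The main obstacle is therefore the explicit computation of the first Birkhoff constant $B_1$ of the family~\eqref{BRgen} at its elliptic fixed point, and the verification that $\operatorname{Re}(B_1)$ vanishes identically (so that $B_1$ is automatically purely imaginary, which must happen because the map is area-related via a smooth invariant density — indeed for measure preserving maps the real parts of the Birkhoff constants vanish) while $\operatorname{Im}(B_1)=0$ precisely when $C=1$. Concretely I would translate $p$ to the origin, put the linear part in the complex normal form $z\mapsto\lambda z$, compute the resonant cubic coefficient to extract $B_1$ as a rational function of $A,B,C$ (and of $\bar x$, which itself depends on $A,B,C$ through $f(\bar x)=\bar x^2$), and simplify. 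I expect the outcome to be that $b_1$ factors with $(C-1)$ as one factor, so $b_1=0\iff C=1$ among elliptic non-$5$-resonant parameters; combined with Theorem~\ref{T-Preserva-mesura} for $C\ne1$ and the explicit first integral for $C=1$, this closes the proof. One should also double-check the degenerate subcases where other factors of $b_1$ could vanish and confirm these are excluded by ellipticity or by the non-$5$-resonance hypothesis.
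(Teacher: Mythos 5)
Your ``if'' direction and the overall strategy for the ``only if'' direction (invoke Theorem~\ref{T-Preserva-mesura} with $\nu=1/(xy)$, establish finiteness of periodic points via Proposition~\ref{L-sistemaN-OP}, then examine Birkhoff constants) match the paper. But the step you reduce everything to --- ``show that if $C\ne1$ then $B_1=i\,b_1$ with $b_1\ne0$'' --- is false, and this is the genuine gap. In the paper's normalized coordinates (fixed point moved to $(1,1)$, parameters $(a,c)$ with $c=C$) one finds $B_1=i\,Q_1(a,c)(1+r^2)^3/(16r(1-3r^2))$ where $Q_1$ is a quartic polynomial. Its zero set is a curve in the $(a,c)$-plane that is \emph{not} contained in $\{c=1\}$, so there are non-integrable parameters with $B_1=0$; and conversely $Q_1(a,1)=a^2(a-3)(a-4)\not\equiv0$, so the integrable family $C=1$ generically has $B_1\ne0$ (an integrable map can perfectly well have a nonzero purely imaginary twist constant --- compare the Lyness and MGM maps in Lemmas~\ref{L-lyness} and~\ref{l-mira}). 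Hence ``$b_1=0\iff C=1$'' fails in both directions, and Theorem~\ref{T-Preserva-mesura} with $n=1$ cannot cover all of $C\ne1$. The paper must continue: on the stratum $Q_1=0$ it computes the second Birkhoff constant, whose real part is a nonzero multiple of a second polynomial $Q_2(a,c)$, and applies Lemma~\ref{L-dynamicaFN-gen} (attractor/repeller, hence not even $\mathcal{C}^2$-integrable) when $Q_2\ne0$; the finitely many common zeros of $Q_1$ and $Q_2$ are then inspected one by one and turn out to be non-elliptic, inside $c=1$, or $5$-resonant. This last case is precisely why the hypothesis ``not $5$-resonant'' appears in the statement --- a hypothesis your plan never uses beyond matching it formally to $n=1$, which is a sign the reduction is too optimistic.

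A secondary inaccuracy: the finiteness of periodic points is not ``exactly as in the proof of Theorem~\ref{P_racionals2}''. For $\deg(f)=2$ the homogeneous system~\eqref{E-sistemaN-OP-homo} is the cyclic system $x_ix_{i+2}-C\,x_{i+1}^2=0$, which has the nonzero solution $(1,\dots,1)$ when $C=1$; one needs a separate argument (multiplying the equations of a putative nonzero solution to force $C$-powers to equal $1$) to conclude that $C\ne1$ leaves only the trivial solution. So the hypothesis $C\ne1$ already enters at the periodic-point stage, not only through the Birkhoff constants.
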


The third application deals with the area preserving maps (density $\nu=1$),
\begin{equation}\label{apm}
    F(x,y)=\left(y,-x+f(y)\right),
\end{equation}
with  $f$ also a rational map. In this case we prove:

\begin{teo}\label{P_racionals3}
Consider the map (\ref{apm}), where $f$ is a rational function with $\deg(f)>1$.  If $p$ is an
elliptic fixed point, not $(2n+1)$-resonant, and such that its first non-vanishing Birkhoff
constant is  $B_n= i\,b_n$, for some $0<n\in\N$ and $b_n\in\R\setminus\{0\},$ then $F$ is not
$\mathcal{C}^{2n+4}$-locally integrable at $p$.
\end{teo}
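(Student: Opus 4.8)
The plan is to verify that the map (\ref{apm}) meets the hypotheses of Theorem~\ref{T-Preserva-mesura}, so the only real content is to check the finiteness of periodic points via Theorem~\ref{teog}. Since $f=P/Q$ is rational with $\deg(f)=\deg(P)-\deg(Q)>1$, first I would note that (\ref{apm}) is of class $\mathcal{C}^\infty$ near the elliptic fixed point $p$ (the denominator $Q$ does not vanish there, since $p$ is a genuine fixed point of $F$), hence in particular $\mathcal{C}^{2n+2}$; that it preserves the Lebesgue measure, so $\nu\equiv1$ is a non-vanishing $\mathcal{C}^{2n+3}$ density; and the remaining spectral hypotheses (elliptic, not $(2n+1)$-resonant, first non-vanishing Birkhoff constant purely imaginary and nonzero) are exactly the assumptions of the statement. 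So by Theorem~\ref{T-Preserva-mesura} it suffices to exhibit an unbounded sequence $\{N_k\}$ of naturals for which $F$ has only finitely many $N_k$-periodic points in $\U$; in fact I will show this for \emph{every} $N\geq 1$.

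The key step is to set up the periodicity system and homogenize it. Writing $z_i=x_i$ for a period-$N$ orbit, the relations $x_{i+1}=-x_{i-1}+f(x_i)$ (indices mod $N$) become, after clearing the denominator $Q(x_i)$, the polynomial system
\[
Q(x_i)\bigl(x_{i+1}+x_{i-1}\bigr)-P(x_i)=0,\qquad i=0,1,\dots,N-1,
\]
on $\C^N$ in the variables ${\bf y}=(x_0,\dots,x_{N-1})$. Call this system $G({\bf y})={\bf 0}$; it has degree $d=1+\max(\deg P,\deg Q+1)$ in the variable $x_i$ within the $i$-th equation, but as a map $\C^N\to\C^N$ its top-degree part $G_d$ is what matters. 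Because $\deg(f)>1$, i.e. $\deg P>\deg Q+1$, the highest-degree term of the $i$-th equation comes solely from $-P(x_i)$, so $G_d({\bf y})=\bigl(-p_d\,x_0^{\,d},\,-p_d\,x_1^{\,d},\dots,-p_d\,x_{N-1}^{\,d}\bigr)$ where $p_d\neq0$ is the leading coefficient of $P$ and $d=\deg P\geq 2$. Then $G_d({\bf y})={\bf 0}$ forces $x_i=0$ for all $i$, so ${\bf y}={\bf 0}$ is the unique solution of the homogeneous system. By Theorem~\ref{teog}, $G({\bf y})={\bf 0}$ has finitely many solutions in $\C^N$, hence finitely many real ones, and each period-$N$ point of $F$ in $\U$ (which is not a pole of $Q$) yields such a solution; therefore $F$ has finitely many $N$-periodic points in $\U$. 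Taking $N_k=k$ gives the required unbounded sequence.

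I expect no genuine obstacle here: the argument is essentially bookkeeping. The only point requiring a little care is the identification of the top-degree homogeneous part $G_d$ — one must check that the contributions of $Q(x_i)(x_{i+1}+x_{i-1})$ (of degree $\deg Q+1$ in the $i$-th equation, counting the linear factor) are strictly dominated by that of $P(x_i)$, which is precisely where the hypothesis $\deg(f)>1$ is used and where the threshold differs from Theorem~\ref{P_racionals2} (there the extra factor $1/x$ in (\ref{fyx}) shifts the relevant degree, forcing $\deg(f)>2$). One should also remark that poles of $Q$ introduce no spurious solutions because the periodic points under consideration lie in $\U$ where $F$ is defined, so clearing denominators is harmless. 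With $G_d$ correctly identified, Theorem~\ref{teog} closes the argument and Theorem~\ref{T-Preserva-mesura} delivers the non-$\mathcal{C}^{2n+4}$-integrability at $p$. \qed
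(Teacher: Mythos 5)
Your proposal is correct and follows essentially the same route as the paper, which omits the details and merely notes that the argument mirrors that of Theorem~\ref{P_racionals2}: clear denominators, observe that $\deg(f)>1$ makes the top-degree homogeneous part of each equation reduce to $-a_p x_i^{\deg P}$ so that $\mathbf{0}$ is the only solution of the homogenized system, apply Proposition~\ref{L-sistemaN-OP} (equivalently Theorem~\ref{teog}) to get finitely many $N$-periodic points for every $N$, and conclude via Theorem~\ref{T-Preserva-mesura}. The only blemish is the stray formula $d=1+\max(\deg P,\deg Q+1)$, which is inconsistent with your own (correct) later identification $d=\deg P$.
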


Also for this family,  when $\deg(f)\le 1$, there are integrable
cases   at least for $\deg(f)\in\{0,-1\}$. For  $\deg(f)=0$,
$f(y)\equiv k$ and   the map is an involution and therefore
rationally integrable, see again~\cite{CGM}. For $\deg(f)=-1$ we can
consider the well-known integrable McMillan-Gumowski-Mira map (for
short MGM map) with $f(y)=ay/(1+y^2)$ and first integral
$V(x,y)=x^2y^2+x^2+y^2-axy$, see~\cite{Gu,McMillan}. We remark that
this map possesses elliptic not resonant points for many values of
$a$, see Section~\ref{SS-Birk}.

Finally, as a consequence   of Theorems~\ref{P_racionals2} and
\ref{P_racionals3}, we prove that two celebrated integrable maps,
the MGM and the Lyness ones given above, are ``isolated" in a
suitable set of rational maps.

\begin{corol}\label{pertur} Let $g$ be a rational function. Consider the
maps:
\begin{enumerate}[(i)]
\item $
  F_\varepsilon(x,y)=\left(y,-x+\frac{a
  y}{1+y^2}+\varepsilon\,g(y)\right),$
with $\deg(g)>1$ and $a\in (-2,\infty)\setminus \{-1,0,2\}.$

\item
$
G_\varepsilon(x,y)=\left(y,\frac{a+y+\varepsilon\,g(y)}{x}\right),$
with  $\deg(g)>2$ and $a\in (-1/4,\infty)\setminus \{0,1\}.$
\end{enumerate}
 Then,
for $|\varepsilon|$ small enough,  $F_\varepsilon$ or
$G_\varepsilon$  are $\mathcal{C}^{6}$-locally integrable at its
corresponding elliptic fixed points if and only if $\varepsilon=0$.
\end{corol}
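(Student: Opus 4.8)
The plan is to reduce Corollary~\ref{pertur} to Theorems~\ref{P_racionals3} and~\ref{P_racionals2}, applied with $n=1$ (so that $\mathcal{C}^{2n+4}=\mathcal{C}^6$), together with a continuity argument in $\varepsilon$. For the implication ``$\varepsilon=0\Rightarrow\mathcal{C}^6$-locally integrable'' nothing new is needed: when $\varepsilon=0$ the maps in (i) and (ii) are, respectively, the MGM map with the rational first integral $V(x,y)=x^2y^2+x^2+y^2-axy$ and the Lyness map with its rational first integral from~\cite{D}. One checks that at the relevant elliptic fixed point (the origin in (i) when $a\in(-2,2)$, the point $(\bar x,\bar x)$ otherwise) the Hessian of $V$ is definite precisely on the stated parameter range, so nearby the level sets of $V$ are closed curves surrounding the fixed point, which is then an isolated non-degenerate critical point of $V$; since $V$ is smooth there (the denominators $1+y^2$, resp.\ $xy$, do not vanish near the fixed point, using $\bar x\ne0$), it witnesses $\mathcal{C}^6$-local integrability.

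For the converse, fix $g$ and $a$ as in the statement and take $0<|\varepsilon|$ small. Since the unperturbed elliptic fixed point $p_0$ has linear part without the eigenvalue $1$, the implicit function theorem produces a fixed point $p_\varepsilon$ of $F_\varepsilon$ (resp.\ $G_\varepsilon$) depending smoothly on $\varepsilon$ with $p_\varepsilon\to p_0$, and its eigenvalues $\lambda(\varepsilon),1/\lambda(\varepsilon)$ vary continuously. As these maps are measure preserving (density $1$, resp.\ $1/(xy)$, smooth and nonzero near $p_\varepsilon$), the trace of the linear part lies in $(-2,2)$ for small $\varepsilon$ by continuity, so $\lambda(\varepsilon)$ stays non-real, hence on $\su$ and away from $\pm1$, i.e.\ $p_\varepsilon$ remains elliptic; it remains not $3$-resonant because the trace at $p_0$ is different from $-1$ for $a$ in the stated range. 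This is exactly where the excluded values ($a\in\{-1,0,2\}$, resp.\ $a\in\{0,1\}$, together with a possible further finite set of parameters) are removed; note for instance that $a=0$ in both families and $a=1$ in (ii) make the unperturbed map periodic near $p_0$, hence linearizable with all Birkhoff constants zero. The degree hypotheses of Theorems~\ref{P_racionals3} and~\ref{P_racionals2} are immediate: for $\varepsilon\ne0$ the rational function defining $F_\varepsilon$ is $\tfrac{ay}{1+y^2}+\varepsilon g(y)$, of degree $\deg(g)>1$, and the one defining $G_\varepsilon$ is $a+y+\varepsilon g(y)$, of degree $\deg(g)>2$, the leading term of $\varepsilon g$ surviving.

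The last hypothesis to check is that the first non-vanishing Birkhoff constant at $p_\varepsilon$ is $B_1(\varepsilon)=i\,b_1(\varepsilon)$ with $b_1(\varepsilon)\ne0$. That this constant is purely imaginary is automatic for measure preserving maps (locally such a map is conjugate to an area preserving one, whose Birkhoff normal form is a twist map with purely imaginary Birkhoff constants; cf.\ Lemma~\ref{L-dynamicaFN-gen}), so only $b_1(\varepsilon)\ne0$ is at stake. Since $B_1$ is computed from finitely many derivatives of the map at the smoothly $\varepsilon$-dependent, non-resonant fixed point, $\varepsilon\mapsto b_1(\varepsilon)$ is continuous; hence it suffices to know $b_1(0)\ne0$, i.e.\ that the \emph{unperturbed} MGM and Lyness maps have nonzero first Birkhoff constant at the pertinent elliptic point for every $a$ in the given ranges. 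Granting this, $b_1(\varepsilon)\ne0$ for $|\varepsilon|$ small (depending on $g$ and $a$), and Theorems~\ref{P_racionals3} and~\ref{P_racionals2} then say that $F_\varepsilon$, resp.\ $G_\varepsilon$, is not $\mathcal{C}^6$-locally integrable at $p_\varepsilon$; combined with the first paragraph this is the claim.

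I expect the main obstacle to be precisely this input $b_1(0)\ne0$: an explicit Birkhoff-normal-form computation for the families $(y,-x+ay/(1+y^2))$ and $(y,(a+y)/x)$ at their $a$-dependent elliptic fixed points. This is a concrete but nontrivial calculation — the fixed point, its eigenvalue, and the cubic normal-form coefficient all vary with $a$ — and it is also what pins down the finitely many parameters that must be excluded, namely those where the unperturbed point is $3$-resonant or where $b_1(0)$ vanishes, the latter occurring exactly at the periodic instances of each family. Once this is established (it is carried out in Section~\ref{SS-Birk}), everything else above is soft continuity and implicit-function-theorem bookkeeping.
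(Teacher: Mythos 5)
Your proposal is correct and follows essentially the same route as the paper: establish ellipticity, non-$3$-resonance and a non-zero purely imaginary $B_1$ for the unperturbed MGM and Lyness maps (the paper's Lemmas on the Birkhoff constants in Section~\ref{SS-Birk}), propagate these properties to $p_\varepsilon$ by continuity, verify the degree hypotheses $\deg(ay/(1+y^2)+\varepsilon g)=\deg(g)>1$ and $\deg(a+y+\varepsilon g)=\deg(g)>2$, and invoke Theorems~\ref{P_racionals3} and~\ref{P_racionals2} with $n=1$. The computational input $b_1(0)\ne 0$ that you flag as the main obstacle is exactly what the paper supplies in Lemma~\ref{l-mira} and Lemma~\ref{L-lyness}.
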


Similarly, we obtain:

\begin{corol}\label{pertur2} Let $g$ and $h$ be  rational functions with
either $\deg(g)>1$ or $\deg(h)>2,$ and $2\deg(g)\ne \deg(h)$. Then, for $|\varepsilon|$ small
enough, the map
\[
H_\varepsilon(x,y)=\left(y,-x+\sqrt{y^2+1+\varepsilon h(y)}+\varepsilon\,g(y)\right),\]
 is not $\mathcal{C}^{6}$-locally integrable at
its elliptic fixed point.
\end{corol}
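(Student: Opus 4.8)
The plan is to check that, for $|\varepsilon|$ sufficiently small, the map $H_\varepsilon$ fits the hypotheses of Theorem~\ref{T-Preserva-mesura} with $n=1$, which then gives the claimed non-$\mathcal{C}^{2n+4}=\mathcal{C}^6$-local integrability. Write $H_\varepsilon(x,y)=(y,-x+f_\varepsilon(y))$ with $f_\varepsilon(y)=\sqrt{y^2+1+\varepsilon h(y)}+\varepsilon g(y)$, so that $H_0$ is exactly the Cohen map~(\ref{E-Cohen}) and the case $\varepsilon=0$ is Theorem~\ref{T-Cohen}; from here on I would assume $\varepsilon\neq0$. Near $y=\sqrt3/3$ the radicand equals $\tfrac43+O(\varepsilon)>0$, so $H_\varepsilon$ is well defined and $\mathcal{C}^\infty$ on a fixed neighbourhood of $p_0=(\sqrt3/3,\sqrt3/3)$, and $\det DH_\varepsilon\equiv1$, so $H_\varepsilon$ preserves Lebesgue measure with density $\nu\equiv1\in\mathcal{C}^{2n+3}$.

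Next I would show that the relevant local invariants persist under the perturbation. The fixed-point equation $2\bar x=f_\varepsilon(\bar x)$ is solved by $(\bar x,\varepsilon)=(\sqrt3/3,0)$, where $\partial_{\bar x}\!\left(2\bar x-f_0(\bar x)\right)=2-\tfrac12=\tfrac32\neq0$; the implicit function theorem then produces an isolated fixed point $p_\varepsilon=(\bar x_\varepsilon,\bar x_\varepsilon)$ depending analytically on $\varepsilon$. Since $\det DH_\varepsilon(p_\varepsilon)=1$ and ${\rm tr}\,DH_\varepsilon(p_\varepsilon)=f_\varepsilon'(\bar x_\varepsilon)\to f_0'(\sqrt3/3)=\tfrac12\in(-2,2)$, the point $p_\varepsilon$ stays elliptic, with eigenvalues $e^{\pm i\theta_\varepsilon}$ varying continuously and $e^{i\theta_0}$ not a root of unity of order $\le5$ (this is established within the proof of Theorem~\ref{T-Cohen}); as there are only finitely many such roots, $p_\varepsilon$ remains not $5$-resonant for $|\varepsilon|$ small. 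Finally, the first Birkhoff constant $B_1(\varepsilon)$ is given by a fixed rational formula in the $3$-jet of $H_\varepsilon$ at $p_\varepsilon$ and is therefore continuous in $\varepsilon$; being area preserving, $H_\varepsilon$ has a twist Birkhoff normal form, so $B_1(\varepsilon)\in i\R$, while $B_1(0)=i\,b_1$ with $b_1\neq0$ — equivalently $n=1$ for the Cohen map, which is precisely why Theorem~\ref{T-Cohen} is a $\mathcal{C}^6$ and not a $\mathcal{C}^8$ statement. By continuity $B_1(\varepsilon)=i\,b_1(\varepsilon)$ with $b_1(\varepsilon)\neq0$, so the first non-vanishing Birkhoff constant of $H_\varepsilon$ is purely imaginary and nonzero, with $n=1$.

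The remaining and main ingredient is the finiteness of the periodic points, which I would obtain from Theorem~\ref{teog}. An $N$-periodic point of $H_\varepsilon$ corresponds to a cyclic sequence $(x_0,\dots,x_{N-1})$ with $x_{j+1}+x_{j-1}-\varepsilon g(x_j)=\sqrt{x_j^2+1+\varepsilon h(x_j)}$ for all $j$ modulo $N$; squaring and clearing the denominators of $g=g_1/g_2$ and $h=h_1/h_2$ gives, for each $j$, the polynomial equation
\[
h_2(x_j)\bigl(g_2(x_j)(x_{j+1}+x_{j-1})-\varepsilon g_1(x_j)\bigr)^2-g_2(x_j)^2h_2(x_j)(x_j^2+1)-\varepsilon\,g_2(x_j)^2h_1(x_j)=0,
\]
and together these define a polynomial map $G:\C^N\to\C^N$ of which every $N$-periodic point of $H_\varepsilon$ (its entries necessarily avoiding the poles of $g$ and $h$) is a zero. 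Writing $E=\deg g$, $D=\deg h$, $a=\deg g_2$, $b=\deg h_2$, the three summands above have total degrees $2a+b+2\max(1,E)$, $2a+b+2$ and $2a+b+D$, and the hypotheses ``$\deg g>1$ or $\deg h>2$'' together with ``$2\deg g\neq\deg h$'' force the top degree $d$ of each component of $G$ to be attained by exactly one summand — the first when $2E>D$ and the third when $D>2E$, the excluded regime $E\le1$, $D\le2$ being exactly what the hypotheses rule out. In either case the degree-$d$ part of the $j$-th equation is $c_j\,x_j^d$ with $c_j\neq0$ (here the assumption $\varepsilon\neq0$ is used: for $\varepsilon=0$ one has instead the Cohen map, with $d=2$ and a non-diagonal leading part, which is why that case is treated separately via Theorem~\ref{T-Cohen}). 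Hence $G_d({\bf x})={\bf 0}$ has ${\bf x}={\bf 0}$ as its only zero in $\C^N$, and Theorem~\ref{teog} yields that $G({\bf x})={\bf 0}$, and therefore the set of $N$-periodic points of $H_\varepsilon$, is finite — and this holds for every $N$.

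Assembling the pieces, for all small $\varepsilon\neq0$ the map $H_\varepsilon$ satisfies the hypotheses of Theorem~\ref{T-Preserva-mesura} with $n=1$, so it is not $\mathcal{C}^6$-locally integrable at $p_\varepsilon$; together with Theorem~\ref{T-Cohen} for $\varepsilon=0$ this proves the corollary. I expect the two delicate points to be the uniform-in-$\varepsilon$ control of the Birkhoff datum (routine once one records that all the relevant data depend analytically on $\varepsilon$ through the moving fixed point, but it must be made explicit) and, above all, the degree bookkeeping for the polynomial system, where the precise arithmetic conditions $2\deg g\neq\deg h$ and $\deg g>1$ or $\deg h>2$ are exactly what is needed to produce a single dominant summand with a purely diagonal, non-vanishing leading term.
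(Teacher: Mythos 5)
Your proposal is correct and follows essentially the same route as the paper: the case $\varepsilon=0$ via Theorem~\ref{T-Cohen}, persistence of the elliptic, non-resonant fixed point with purely imaginary $B_1$ by continuity and area preservation (the scheme of Corollary~\ref{pertur}), and finiteness of $N$-periodic points by clearing denominators in the squared recurrence and checking that the hypotheses on $\deg(g)$, $\deg(h)$ and $2\deg(g)\ne\deg(h)$ yield a diagonal leading part $c_j x_j^d$ so that Proposition~\ref{L-sistemaN-OP} applies. Your degree bookkeeping matches the paper's leading term $d_s a_p^2 x_j^{2p+s}$ (up to the nonzero factor $\varepsilon^2$), and your version is in fact more explicit than the paper's, which only sketches the perturbative part.
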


We remark that a similar result to the one of item (i) of Corollary
\ref{pertur} was obtained in \cite{DR} when $a>2$, proving that the
map $F_\varepsilon$, when $0\ne |\varepsilon|$ is small enough, is
not holomorphically integrable near the homoclinic loops passing
through the origin. When $a<2$ these homoclinic loops no more exist.
Alternatively, our approach proves the smooth non-integrability near
the elliptic points, that exist for all $a\in
(-2,\infty)\setminus\{0,2\}.$ The value $a=-1$ is excluded in our
study because for it the map $F_0$ has a 2-resonance  at the origin.

The paper is structured as follows. Section~\ref{S-proofs} contains several preliminary results.
More concretely, in Section~\ref{SS-prelim} we include our results about the regularity of the
period function and in particular the proof of Theorem~\ref{T-regularitat-periode}. In
Section~\ref{SS-continua} we prove Theorem~\ref{teog}, which recall  that gives a tool for proving
the existence of finitely many $N$-periodic points and in Section~\ref{SS-Birk} we introduce the
Birkhoff constants and compute them in some simple examples that will appear afterwards. Section
\ref{S-Cohen} is devoted to prove   the non-integrability of the Cohen map, that is Theorem
\ref{T-Cohen}.  Finally, in Section~\ref{S-Other}, we apply our results to several rational maps
motivated from well-known difference equations, proving Theorems~\ref{P_racionals2}
and~\ref{P_racionals3} and their consequences.

\section{Preliminary results}\label{S-proofs}

\subsection{Regularity of the period function.
Proof of Theorem \ref{T-regularitat-periode}}\label{SS-prelim}

Observe that from the implicit function theorem, and the regularity
of the flow of $X$, the function
 $T(x,y)$ is of class $\mathcal{C}^k$ in $\V\setminus\{p\}$. So, to prove
 Theorem~\ref{T-regularitat-periode} it only remains to
 study the regularity of $T$ at $p$.
 In the cases $k\in\{\infty,\omega\}$ this is already done in~\cite{V}. Let us consider $k\in\N.$
 It is well-known that, since $p$ is a non-degenerate center, the vector
field in a neighborhood of $p$ is $\mathcal{C}^k$-conjugated to its
Poincar\'{e} normal form , see~\cite{GH}. Its corresponding
differential equation is
$$
\dot{z}=iz\,\left(\omega +\sum\limits_{j=1}^{s} a_{2j} (z\bar{z})^j\right)+o(|z|^k).
$$
where $z=x+iy\in\C$,  $\omega, a_{2j}\in\R$, and
$s=\left[(k-1)/2\right]$. Taking polar coordinates we get:
$$
    \left\{
      \begin{array}{ll}
        \dot{r}=o_{\theta}(r;k),\\
        \dot{\theta}=\omega +\sum\limits_{j=1}^{s}
a_{2j} r^{2j}+o_{\theta}(r;k-1),
      \end{array}
    \right.
$$
where $o_{\theta}(r;m)$ stands for a $\mathcal{C}^m$ function of the form $f(r,\theta)\,r^m$ such
that $\lim\limits_{r\to 0} f(r,\theta)=0$, uniformly in $\theta$. Notice that all the derivatives
of $o_{\theta}(r;m)$ of order less or equal than $m$ at the origin are zero. Hence
$$
\frac{dr}{d\theta}=\frac{o_{\theta}(r;k)}{\omega +\sum\limits_{j=1}^{s} a_{2j}
r^{2j}+o_{\theta}(r;k-1)}=o_\theta(r;k).
$$
The solution of the above equation with initial condition $r(\alpha)=\rho$, is
$r(\theta;\rho,\alpha)=\rho+o_{\theta,\alpha}(\rho;k)$, where here $o_{\theta,\alpha}(r;m)$ stands
for a $\mathcal{C}^m$ function of the form $r^m\, g(r,\theta,\alpha)$ and $\lim\limits_{r\to 0}
g(r,\theta,\alpha)=0$, uniformly in both variables $\theta$ and $\alpha$. Moreover, the period
function expressed in the  polar coordinates $(\rho,\alpha)$, is
$$\begin{array}{rl}
    \widetilde{T}(\rho,\alpha) & =\displaystyle{\int_{\alpha}^{\alpha+2\pi} \frac{d\theta}{\omega
+\sum\limits_{j=1}^{s} a_{2j}
r^{2j}(\theta;\rho,\alpha)+o_{\theta,\alpha}(r(\theta;\rho,\alpha);k-1)}}=\\
&=\displaystyle{\int_{\alpha}^{\alpha+2\pi} \frac{d\theta}{\omega +\sum\limits_{j=1}^{s} a_{2j}
\rho^{2j}+o_{\theta,\alpha}(\rho;k-1)}}\\
 &=\displaystyle{\int_{\alpha}^{\alpha+2\pi}
\frac{1}{\omega} +\sum\limits_{j=1}^{s} \tau_{2j}
\rho^{2j}+o_{\theta,\alpha}(\rho;k-1) \,d\theta}\\
     &=\frac{2\pi}{\omega}
+\sum\limits_{j=1}^{s} T_{2j} \rho^{2j}+\int_{\alpha}^{\alpha+2\pi} o_{\theta,\alpha}(\rho;k-1) \,
d\theta,
  \end{array}
$$
where $T_{2j}$ are some real constants. Notice also that
\[
\int_{\alpha}^{\alpha+2\pi} o_{\theta,\alpha}(\rho;k-1) \, d\theta= o_{\alpha}(\rho;k-1),
\]
because  if
$o_{\theta,\alpha}(\rho;k-1)=\rho^{k-1}g(\rho,\theta,\alpha),$ then
the function $g(\rho,\theta,\alpha)$ tends to zero, when $\rho$ goes
to zero, uniformly in $\theta$ and $\alpha$, and therefore,
$$
\widetilde{T}(\rho,\alpha)=\frac{2\pi}{\omega}+\sum\limits_{j=1}^{s} T_{2j}
\rho^{2j}+\rho^{k-1}h(\rho,\alpha),
$$
for some $h$ such that  $\lim\limits_{\rho\to 0} h(\rho,\alpha)=0$, uniformly in $\alpha$.  Then,
the period function at the point $z=x+iy$ is $T(x,y)=\widetilde{T}(\sqrt{x^2+y^2},\arg(x+iy))$, so
$$
T(x,y)= \frac{2\pi}{\omega}+\sum\limits_{j=1}^{s} T_{2j}
(x^2+y^2)^{j}+(x^2+y^2)^{\frac{k-1}{2}}H(x,y),
$$
with  $H(x,y)=h(\sqrt{x^2+y^2},\arg(x+iy))$, satisfying $ \lim\limits_{(x,y)\to (0,0)} H(x,y)=0.$
This implies that $(x^2+y^2)^{\frac{k-1}{2}}H(x,y)$ is of class $\mathcal{C}^{k-1}$ at the origin,
with all the derivatives zero, as we wanted to show.

Finally we  give some examples which prove that the regularity given at $p$ can not be improved.
For $ a \in\R$ consider the vector field, with associated differential system,
$$
    \left\{
      \begin{array}{ll}
        \dot{x}=-y\left(1+(x^2+y^2)^{ a }\right),\\
        \dot{y}=x\left(1+(x^2+y^2)^{ a }\right),
      \end{array}
    \right.
$$
Its period function is
$$
T(x,y)=\frac{2\pi}{1+(x^2+y^2)^{ a }}.
$$
Taking $ a =k/2$ when $k$ is odd, or $ a =k/(k+1)$ when $k$ is even, we obtain
$\mathcal{C}^{k}$-vector fields such that its corresponding period function is of class
$\mathcal{C}^{k-1}$, and not of class $\mathcal{C}^{k}$ at the origin.

\subsection{Existence of finitely many periodic points. Proof of Theorem \ref{teog}}\label{SS-continua}

We start by proving Theorem \ref{teog}. Then, in Proposition
\ref{L-sistemaN-OP},   we adapt it for maps coming from difference
equations.

\begin{proof}[Proof of Theorem \ref{teog}] Set ${\bf x}=(x_0,\ldots,x_{N-1})$.  We want to prove that
when $\mathbf{0}$ is the unique solution of $G_d({\bf x})={\bf 0}$ then  $G({\bf x})={\bf 0}$ has
 finitely many solutions. Observe that the system $G({\bf x})={\bf 0}$ defines an
algebraic set ${\mathcal X}\subset\C P^N$ given by $\tilde{G}({{\bf z}})={\bf 0},$ where $\tilde
G= (\tilde{G}_0, \ldots,
 \tilde{ G }_{N-1},x_N^d),$
$\tilde{ G }_i({{\bf z}})$ is the corresponding homogenization of $ G _i$, and ${{\bf
z}}=[x_0:\ldots:x_{N-1}:x_N]\in\C P^N$ where $\{x_N=0\}$ is the hyperplane at infinity.

We  will use that given an algebraic set ${\mathcal X}$ such that
$\dim({\mathcal X})=r$, then $\dim({\mathcal X}\cap\{x_N=0\})\geq
r-1$. This fact is a consequence of the well-known Projective
Dimension Theorem on dimension theory of algebraic varieties,
see~\cite[Th. 7.2]{Harst}, also ~\cite[Cor. 3.14]{M}.

Assume that $G({\bf x})={\bf 0}$ has infinitely many solutions. Then  $\dim({\mathcal X})\ge1$. By
the  above result, $\dim({\mathcal X}\cap\{x_N=0\})\geq 0.$ This inequality implies that $\mathcal
X$ intersects the hyperplane of infinity. This fact is precisely equivalent to say that $G_d({\bf
x})={\bf 0}$ has some non-zero solution, as we wanted to prove.~\end{proof}

Another proof of Theorem~\ref{teog} holds by using the following consequence of Chevalley's
Theorem (see \cite[Ex. 3.19]{Harst}): the first coordinate projection of ${\mathcal X}$ is either
finite or dense in~$\C.$ Then, under our hypotheses, this projection is dense in $\C$, implying
that ${\mathcal X}$ reaches infinity at some points that  produce non-zero solutions of $G_d({\bf
x})={\bf 0}.$ Clearly,  Theorem~\ref{teog} does not hold for real algebraic varieties, as the
circle in $\R^2$, $x_1^2+x_2^2=1,$ shows.

Given a difference equation such that its periodic solutions satisfy
certain \emph{algebraic} recurrence $g(x_{n},\ldots,x_{n+k})=0$. The
existence of $N$-periodic orbits can be characterized by a system of
$N$ algebraic equations
\begin{equation}\label{E-sistemaN-OP}
G({\bf x})={\bf 0},\quad\mbox{where}\quad
G=(g_0,g_1,\ldots,g_{N-1}),
\end{equation}
${\bf x}=(x_0,\ldots,x_{N-1})$ and $ g_i({\bf x}):=g(x_{i\mbox{ mod } N},\ldots,x_{i+k\mbox{ mod }
N})$. If $d=\deg(g)$ then $\deg(G)=d$ and $G_d=(g_{0,d},g_{1,d},\ldots,g_{N-1,d}),$ where
$g_{i,d}$ is the homogenous part of degree $d$ of $g_i,$
\begin{equation}\label{Notacio}
g_{i,d}({\bf x}):=g_d(x_{i\mbox{ mod } N},x_{i+1\mbox{ mod }
N},\ldots,x_{i+k\mbox{ mod } N}).
\end{equation}

Therefore, Theorem \ref{teog} for difference equations reads as
follows:

\begin{propo}\label{L-sistemaN-OP}
Consider a difference equation of order $k$ such that its associated sequence  satisfies an
algebraic recurrence $ g(x_{n},\ldots,x_{n+k})=0$ of degree $d$. Then it has finitely many
$N$-periodic solutions   if  ${x}=\mathbf{0}$ is the unique solution  in $\C^N$ of the system
\begin{equation}\label{E-sistemaN-OP-homo}
  g_{0,d}({\bf x})=0\,,\, g_{1,d}({\bf x})=0\,,\,\ldots\,,\,
 g_{N-1,d}({\bf x})=0.
\end{equation}
\end{propo}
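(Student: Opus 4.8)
The plan is to reduce Proposition~\ref{L-sistemaN-OP} to a direct application of Theorem~\ref{teog}. The statement is essentially a repackaging of Theorem~\ref{teog} in the language of difference equations, so the proof is short: one just has to verify that the map assembled from the shifted copies of the algebraic recurrence satisfies the hypotheses of Theorem~\ref{teog}.

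First I would set $G=(g_0,g_1,\ldots,g_{N-1})\colon\C^N\to\C^N$ with $g_i(\mathbf{x})=g(x_{i\bmod N},\ldots,x_{i+k\bmod N})$ as in~(\ref{E-sistemaN-OP}). Since each $g_i$ is obtained from the single polynomial $g$ of degree $d$ by a choice of $k+1$ of the $N$ variables (with indices read cyclically mod $N$), each $g_i$ has degree at most $d$, and $G$ is a polynomial map with $\deg(G)=d$. The crucial observation is that the degree-$d$ homogeneous part of $g_i$ is exactly $g_{i,d}(\mathbf{x})=g_d(x_{i\bmod N},\ldots,x_{i+k\bmod N})$, where $g_d$ is the degree-$d$ homogeneous part of $g$; this is precisely the notation fixed in~(\ref{Notacio}). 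Hence the homogeneous map $G_d$ associated with the top-degree terms of $G$ is $G_d=(g_{0,d},\ldots,g_{N-1,d})$.

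Now I would invoke the hypothesis: $\mathbf{x}=\mathbf{0}$ is the unique solution in $\C^N$ of the system~(\ref{E-sistemaN-OP-homo}), which by the identification above says exactly that $\mathbf{0}$ is the unique zero of $G_d$ in $\C^N$. Theorem~\ref{teog} then yields that $G(\mathbf{x})=\mathbf{0}$ has finitely many solutions in $\C^N$, hence in particular finitely many real solutions. Finally, every $N$-periodic solution $(x_0,x_1,\ldots)$ of the difference equation gives, upon reading off a window of $N$ consecutive terms, a point $\mathbf{x}=(x_0,\ldots,x_{N-1})$ satisfying $g_i(\mathbf{x})=g(x_{i},\ldots,x_{i+k})=0$ for all $i$ (indices mod $N$), i.e.\ a solution of $G(\mathbf{x})=\mathbf{0}$; and distinct $N$-cycles (up to the phase shift) give distinct such points. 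Therefore the number of $N$-periodic solutions is finite.

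There is no real obstacle here beyond bookkeeping; the only point that deserves a sentence of care is the translation between ``$N$-periodic solution of the difference equation'' and ``solution of the system $G(\mathbf{x})=\mathbf{0}$'', making sure that having only finitely many of the latter forces only finitely many of the former (which is clear since each periodic orbit injects into the solution set of~(\ref{E-sistemaN-OP}) by fixing a starting index). Everything else is immediate once the homogeneous parts are matched up as in~(\ref{Notacio}).
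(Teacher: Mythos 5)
Your proof is correct and follows exactly the route the paper intends: the paper itself presents Proposition~\ref{L-sistemaN-OP} as an immediate reformulation of Theorem~\ref{teog}, after setting up the map $G=(g_0,\ldots,g_{N-1})$ and identifying $G_d=(g_{0,d},\ldots,g_{N-1,d})$ via~(\ref{Notacio}) in the preceding paragraph. Your additional remark on translating between $N$-periodic solutions and zeros of $G$ is the only content beyond that identification, and it is handled correctly.
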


\subsection{Birkhoff normal forms.}\label{SS-Birk}

The computation of the Birkhoff normal form is a well-known issue
and we address  the reader to~\cite{AP} for further references. In
particular the expression of  the first Birkhoff constant of a map
with a not $3$-resonant fixed point at the origin,
\begin{equation}\label{E-F-diagonal}
F(x,y)=\Big(\lambda x\,+\sum_{i+j\geq 2} f_{i,j}x^iy^j, \frac{1}{\lambda}\,y\,+\sum_{i+j\geq 2}
g_{i,j}x^iy^j\Big),\end{equation} where $\lambda\in\C,$ $|\lambda|=1$, is
\begin{equation}\label{E-B1-Gen}
B_1=\frac{ \mathcal{P}_1(F) }{{\lambda}^{2} \left( \lambda-1 \right) \left( {\lambda}^{2}+\lambda+
1 \right) },
\end{equation}
where
$$\begin{array}{rl}
         \mathcal{P}_1(F)=& \left( f_{11}g_{11}+f_{21} \right)
{\lambda}^{4}-f_{11}
 \left( 2f_{20}-g_{11} \right) {\lambda}^{3}+ \left( 2
f_{02}g_{20} -f_{11}f_{20}+f_{11}g_{11} \right){\lambda}^{2}\\
&- \left( f_{11}f_{20}+f_{21} \right) \lambda+ f_{11}f_{20},
        \end{array}
$$ see for instance~\cite[Sect. 4]{CGM13}. The general expression of $B_2$ is
$$B_2=\frac{\mathcal{P}_3(F)}{{\lambda}^{4} \left( \lambda-1 \right) ^{3} \left( {\lambda}^{2}+
\lambda+1 \right) ^{3} \left( {\lambda}^{2}+1 \right)  \left( \lambda+ 1 \right) },
$$
where $\mathcal{P}_3(F)$ is a huge polynomial expression that can be found in \cite[App.
A]{CGM13}.

A well-known result is the following:

\begin{lem}\label{L-dynamicaFN-gen}
For $1<n\in\N$, consider a $\mathcal{C}^{2n+2}$-map $F$  with an elliptic fixed point $p\in
\mathcal{U}$,  not $(2n+1)$-resonant. Let $B_n$ be its first non-vanishing  Birkhoff constant. If
$\mathrm{Re}(B_n)< 0$ (resp. $\mathrm{Re}(B_n)>0$), then the point $p$ is a local attractor
  (resp. repeller) point. In particular the map is not $\mathcal{C}^2$-locally integrable at $p$.
\end{lem}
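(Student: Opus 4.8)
The plan is to pass to the Birkhoff normal form and analyze the evolution of the modulus $|z|$ under iteration. Since $p$ is not $(2n+1)$-resonant and $F$ is $\mathcal{C}^{2n+2}$, by \eqref{E-FN} we may choose $\mathcal{C}^{2n+2}$-coordinates centered at $p$ in which $F$ is conjugate to
\[
F_B(z)=\lambda z\Bigl(1+\sum_{j=1}^{n} B_j(z\bar z)^j\Bigr)+o(|z|^{2n+1}),
\]
with $B_1=\cdots=B_{n-1}=0$ and $B_n\neq 0$. First I would compute $|F_B(z)|^2=F_B(z)\overline{F_B(z)}$. Using $|\lambda|=1$ and expanding, the cross terms of order up to $|z|^{2n}$ beyond the leading $|z|^2$ contribute $|z|^2\cdot 2\,\mathrm{Re}(B_n)\,(z\bar z)^n$, so that
\[
|F_B(z)|^2=|z|^2\bigl(1+2\,\mathrm{Re}(B_n)\,|z|^{2n}\bigr)+o(|z|^{2n+2}).
\]
Writing $r=|z|$ and $\rho=|F_B(z)|$, this gives $\rho^2=r^2+2\,\mathrm{Re}(B_n)\,r^{2n+2}+o(r^{2n+2})$, hence $\rho=r+\mathrm{Re}(B_n)\,r^{2n+1}+o(r^{2n+1})$.

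Next I would exploit the sign of $\mathrm{Re}(B_n)$. Suppose $\mathrm{Re}(B_n)<0$. Then for $r>0$ small enough the map $r\mapsto \rho(r)$ satisfies $\rho(r)<r$, with $\rho(r)-r\asymp \mathrm{Re}(B_n)\,r^{2n+1}$; a standard comparison argument for one-dimensional discrete dynamical systems (comparing with the solution of the model recurrence $r_{k+1}=r_k+\mathrm{Re}(B_n)r_k^{2n+1}$, whose orbits decrease to $0$) shows that every orbit starting in a sufficiently small punctured disc has $|F^k(z)|\to 0$ monotonically. Thus $p$ is a local attractor. The case $\mathrm{Re}(B_n)>0$ is symmetric: apply the same argument to $F^{-1}$, whose first non-vanishing Birkhoff constant is $-B_n$ (or equivalently read off $\rho(r)>r$ directly), so $p$ is a local repeller.

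Finally, the non-integrability conclusion is immediate: if $F$ were $\mathcal{C}^2$-locally integrable at $p$, there would be a neighborhood $\U$ of $p$ and a first integral $V\in\mathcal{C}^2(\U)$ with all level sets $\{V=h\}\cap\U$ closed curves surrounding $p$ and $p$ an isolated non-degenerate critical point of $V$. After shrinking $\U$, the function $V$ would be strictly monotone along rays from $p$, so its level curves would trap orbits: no orbit could spiral inward (or outward) to $p$. This contradicts the attracting (or repelling) behavior just established. The only delicate point is the comparison step for the scalar recurrence $\rho(r)=r+\mathrm{Re}(B_n)r^{2n+1}+o(r^{2n+1})$, where one must ensure that the $o(r^{2n+1})$ error — which is $\mathcal{C}^{2n+2}$-small but not necessarily of a clean sign — does not overturn the monotone behavior near $0$; this is handled by choosing the disc small enough that the error is dominated by $\tfrac12|\mathrm{Re}(B_n)|r^{2n+1}$, which is exactly where the not-$(2n+1)$-resonance and the regularity hypothesis are used to guarantee that the normal form reduction produces a genuine $o(r^{2n+1})$ remainder.
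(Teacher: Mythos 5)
Your proposal is correct and follows essentially the same route as the paper: the key step in both is the computation $|F_B(z)|^2=|z|^2\bigl(1+2\,\mathrm{Re}(B_n)|z|^{2n}\bigr)+o(|z|^{2n+2})$, which the paper packages as the statement that $V(z)=|z|^2$ is a strict Lyapunov function for the normal form, while you unpack it into a scalar comparison for $r\mapsto r+\mathrm{Re}(B_n)r^{2n+1}+o(r^{2n+1})$. Your level-curve trapping argument for the non-integrability conclusion is a legitimate expansion of the paper's one-line remark (equivalently: a continuous first integral is constant on orbits converging to $p$, hence constant near $p$).
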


\begin{proof} It suffices to prove that  $\textrm{Re}(B_k)\neq 0$ implies that the
function $V(z)=z \bar{z}=|z|^2$ is a strict Lyapunov  function at the origin for the normal form
map $F_B$ of $F$, given in~(\ref{E-FN}). For instance, when $\textrm{Re}(B_n)< 0$,
$$
V(F_B(z))=|z|^2\Big|1+2\textrm{Re}(B_n)(z\bar{z})^n+ o(|z|^{2n})\Big|<V(z),
$$
for $z$ in a small enough neighborhood of $p$, as we wanted to prove. Clearly these  maps can not
be locally integrable at $p$.~\end{proof}

As an example of computation of $B_1$, consider the elliptic fixed points of the following class
of area preserving integrable MGM maps (\cite{DR,Gu}),
\begin{equation}\label{gm}
F(x,y)=\Big(y, -x+\frac{a y}{1+y^2}\Big),\quad a\in\R.
\end{equation}

\begin{lem}\label{l-mira} The elliptic fixed points of~(\ref{gm}) are the origin, when $|a|<2$, and
$(\pm z(a),\pm z(a)),$ when $a>2,$ where $z(a)=\sqrt{(a-2)/2}.$ Moreover,

\begin{enumerate}[(i)]

\item When $|a|<2$ and $a\not\in\{-1\}$ the first Birkhoff constant of the  origin is
 \[
B_1=\frac{3 a}{\sqrt{4-a^2}}\,i.
 \]
\item When $a>2$ the first Birkhoff constant of the points $(\pm z(a),\pm z(a))$ is
\[
B_1=-\frac{4\sqrt{2}(a+4)}{a^2\sqrt{a-2}} \,i.
\]
\end{enumerate}
\end{lem}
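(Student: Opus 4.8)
The plan is to compute the first Birkhoff constant $B_1$ directly from formula~\eqref{E-B1-Gen}, after bringing the map into the diagonal form~\eqref{E-F-diagonal}.

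First I would locate the elliptic fixed points. They are of the form $p=(\bx,\bx)$ with $-\bx+a\bx/(1+\bx^2)=\bx$, i.e. $\bx=0$ or $2(1+\bx^2)=a$, giving $\bx=\pm z(a)$ with $z(a)=\sqrt{(a-2)/2}$, which is real precisely when $a>2$. At such a point the linear part of $F(x,y)=(y,-x+f(y))$ with $f(y)=ay/(1+y^2)$ has matrix $\left(\begin{smallmatrix}0&1\\-1&f'(\bx)\end{smallmatrix}\right)$, whose characteristic polynomial is $\mu^2-f'(\bx)\mu+1$; the eigenvalues lie on the unit circle (and are not $\pm1$) exactly when $|f'(\bx)|<2$. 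For the origin $f'(0)=a$, so this is $|a|<2$; for $\bx=\pm z(a)$ one computes $f'(\bx)=a(1-\bx^2)/(1+\bx^2)^2$ and checks $|f'(\bx)|<2$ holds for all $a>2$. One should also record when the $3$-resonances occur, since~\eqref{E-B1-Gen} has the factor $(\lambda-1)(\lambda^2+\lambda+1)$ in the denominator: $\lambda=1$ is excluded by ellipticity, while $\lambda^2+\lambda+1=0$ corresponds to $f'(\bx)=-1$; at the origin this is $a=-1$, which is exactly the excluded value in item~(i), and for $a>2$ it never happens.

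Next I would diagonalize. Write $\lambda$ for the eigenvalue with $\im\lambda>0$; since $\lambda+\bar\lambda=f'(\bx)$ and $|\lambda|=1$, we have $\lambda=\tfrac{f'(\bx)}{2}+i\sqrt{1-f'(\bx)^2/4}$. Translate $p$ to the origin and apply the linear change that diagonalizes the linear part (taking complex conjugate coordinates $z,\bar z$), expanding $f$ to third order at $\bx$ so as to extract the coefficients $f_{i,j},g_{i,j}$ with $i+j\in\{2,3\}$ appearing in~\eqref{E-F-diagonal}. Because the second component of $F$ is $-x+f(y)$, only the variable $y$ enters nonlinearly, so most of these Taylor coefficients vanish and the bookkeeping is manageable; the relevant data are $f''(\bx)$ and $f'''(\bx)$, which for $f(y)=ay/(1+y^2)$ are rational functions of $a$ (for the origin, $f''(0)=0$ and $f'''(0)=-6a$, which already simplifies things a lot). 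Then substitute into $\mathcal{P}_1(F)$ and~\eqref{E-B1-Gen} and simplify.

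The main obstacle is purely computational: keeping the complex linear change of variables and the resulting $f_{i,j},g_{i,j}$ straight, and then simplifying the rational-in-$\lambda$ expression~\eqref{E-B1-Gen} using $\lambda\bar\lambda=1$ and $\lambda+\bar\lambda=f'(\bx)$ down to the claimed closed forms $B_1=\tfrac{3a}{\sqrt{4-a^2}}\,i$ for the origin and $B_1=-\tfrac{4\sqrt2(a+4)}{a^2\sqrt{a-2}}\,i$ for $(\pm z(a),\pm z(a))$. A useful sanity check at each stage is that $B_1$ must come out purely imaginary: the map~\eqref{gm} is area preserving (density $\nu=1$), so by Lemma~\ref{L-dynamicaFN-gen} one cannot have $\re(B_1)\ne0$ (the fixed point would be an attractor or repeller, impossible for an area preserving map), which both confirms the form of the answer and catches algebra errors. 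For the point $(z(a),z(a))$ versus $(-z(a),-z(a))$, the symmetry $(x,y)\mapsto(-x,-y)$ is not a symmetry of $F$, but replacing $\bx$ by $-\bx$ only changes the signs of $f''$ and $f'''$ in a way that leaves the even combination entering $B_1$ invariant, so the two points share the same $B_1$; this should be checked explicitly rather than merely asserted.
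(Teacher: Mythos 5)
Your plan is the same as the paper's: translate the fixed point to the origin, diagonalize the linear part so the map takes the form \eqref{E-F-diagonal}, and evaluate \eqref{E-B1-Gen}; the paper merely tames the algebra with the parametrizations $a=\lambda+1/\lambda$ (for the origin) and $a=2\mu^2+2$ (for the outer points), while you carry $f'(\bx)$, $f''(\bx)$, $f'''(\bx)$ directly. However, two of your side assertions are wrong, and one of them is a step that would fail. The claim that the $3$-resonance $\lambda^2+\lambda+1=0$ ``never happens'' for $a>2$ is false: at $(\pm z(a),\pm z(a))$ one computes $f'(\bx)=2(4-a)/a$, which equals $-1$ precisely at $a=8$, so there the eigenvalues are primitive cube roots of unity and the denominator of \eqref{E-B1-Gen} vanishes. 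Your computation (and, strictly speaking, item (ii) as stated) must either exclude $a=8$ or show the singularity is removable --- the closed form $-\tfrac{4\sqrt{2}(a+4)}{a^2\sqrt{a-2}}\,i$ is finite there, so the numerator $\mathcal{P}_1(F)$ must vanish as well, but that needs checking rather than the blanket assertion you make. (The paper's own proof is silent on this point too.)

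The second slip is harmless but worth correcting because it would simplify your argument: $(x,y)\mapsto(-x,-y)$ \emph{is} a symmetry of \eqref{gm}. Since $f(y)=ay/(1+y^2)$ is odd, $F(-x,-y)=-F(x,y)$, so this linear involution conjugates $F$ with itself and swaps the two fixed points $(\pm z(a),\pm z(a))$; being linear, it preserves Birkhoff constants, which gives at once that both points share the same $B_1$, with no need for your parity analysis of $f''$ and $f'''$. Apart from these two points the proposal is sound, and, like the paper's proof, it ultimately rests on a symbolic computation that is described but not executed.
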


\begin{proof} The characterization of the elliptic fixed points of~(\ref{gm}) is straightforward.

(i) The eigenvalues of the Jacobian matrix $DF(0,0)$ satisfy $\lambda^2-a\lambda+1=0$, and their
are a couple of conjugate pure imaginary values if and only if $|a|<2$. If, in addition, $a\neq
-1$ then the origin is not $3$-resonant. We introduce the rational parametrization $
a=\lambda+1/{\lambda},$ with $\lambda=\mbox{e}^{i\,\theta}, \theta\in\R\setminus\{0\},$ that
covers all the values $|a|<2$ because $a=2\,\cos\theta$. Then, the linear transformation
 $\Phi(x,y)=(x/\lambda+\lambda y,x+y)$ gives a conjugation
between $F$ and a map of the form (\ref{E-F-diagonal}). Using the expression (\ref{E-B1-Gen}),
after some computations we get
$$B_1=-\frac{3 (\lambda^2+1)}{\lambda^2-1}=\frac{3\cos \theta}{\sin\theta}\,i
=\frac{3 a}{\sqrt{4-a^2}}\,i.$$

(ii) To study the Birkhoof constants at the points $(\pm z(a),\pm
z(a))$ it is convenient in this case to introduce the new
parametrization $a=2\mu^2+2,\mu>0.$ Then the fixed points are
$(\pm\mu,\pm\mu)$  and after translating each of them  to the origin
we can proceed as in item~(i). For both points we get
\[
B_1=-\frac{2(\mu^2+3)}{\mu(\mu^2+1)^2}\,i=-\frac{4\sqrt{2}(a+4)}{a^2\sqrt{a-2}}
\,i,
\]
as we wanted to prove.~\end{proof}

The Birkhoff constants given in the above lemma are also obtained in \cite{KNP} to study the
stability of the elliptic fixed points.

Next two results are stated without detailing the proofs. The first
one follows after simple computations. The second one is as
consequence of the computations  in~\cite[Sec. 6.1]{CGM13}, because
the so called {\it periodicity conditions}, introduced and given in
that paper, essentially coincide with the Birkhoff constants.

\begin{lem}\label{L-B1-cohen}
The first Birkhoff constant of the Cohen map~(\ref{E-Cohen}) at the elliptic fixed point
$(\sqrt{3}/3,\sqrt{3}/3)$ is  $B_1=i\,135/256$.
\end{lem}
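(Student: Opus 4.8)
The plan is to compute the first Birkhoff constant directly from formula (\ref{E-B1-Gen}) after reducing the Cohen map to the diagonal form (\ref{E-F-diagonal}). First I would locate the elliptic fixed point: solving $F(x,y)=(x,y)$ for (\ref{E-Cohen}) gives $x=y$ and $2x=\sqrt{x^2+1}$, hence $x=y=\sqrt3/3$. Translating $p=(\sqrt3/3,\sqrt3/3)$ to the origin and computing the Jacobian $DF(p)$, one finds a matrix with determinant $1$ (as it must, since $F$ is area preserving: the Jacobian of $(y,-x+\sqrt{y^2+1})$ has the form $\left(\begin{smallmatrix}0&1\\-1&*\end{smallmatrix}\right)$) and trace $2f'(\sqrt3/3)$ with $f(y)=\sqrt{y^2+1}$, so $f'(\sqrt3/3)=(\sqrt3/3)/(2/\sqrt3)=1/2$, giving trace $1$. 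Thus the eigenvalues satisfy $\lambda^2-\lambda+1=0$, i.e. $\lambda=e^{\pm i\pi/3}$, a primitive sixth root of unity; in particular $p$ is elliptic and, since $\lambda$ is not a root of unity of order $\le 3$, it is not $3$-resonant, so (\ref{E-B1-Gen}) applies.

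Next I would produce an explicit linear change of variables $\Phi$ conjugating the translated map to the diagonal form (\ref{E-F-diagonal}) with this $\lambda$ on the diagonal — exactly as in the proof of Lemma \ref{l-mira}(i), where $\Phi(x,y)=(x/\lambda+\lambda y,\,x+y)$ works for maps of the type $(y,-x+g(y))$. Under such a conjugation the coefficients $f_{i,j},g_{i,j}$ are read off from the Taylor expansion of $\sqrt{y^2+1}$ about $y=\sqrt3/3$ up to order three (the square root is smooth there, so this is legitimate): one needs $f''$ and $f'''$ at $\sqrt3/3$. Plugging the resulting $f_{i,j},g_{i,j}$ and $\lambda=e^{i\pi/3}$ into $\mathcal{P}_1(F)$ and then into (\ref{E-B1-Gen}) yields $B_1$; the claim is that this evaluates to $i\,135/256$. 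Since $|\lambda|=1$ and the map is measure preserving, $B_1$ is automatically purely imaginary (by Lemma \ref{L-dynamicaFN-gen}, as a nonzero real part would force an attractor/repeller, impossible for an area preserving map), which is a useful consistency check on the arithmetic.

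The only real obstacle here is bookkeeping: carrying the Taylor coefficients of $\sqrt{y^2+1}$ at the nonrational-looking point $\sqrt3/3$ through the linear conjugation and the rational function (\ref{E-B1-Gen}) without error. There is no conceptual difficulty — everything is an explicit finite computation best done with a computer algebra system — so in the write-up I would simply record the Jacobian, the conjugating map, the needed derivatives of $f$, and state the value $B_1=i\,135/256$, noting that the purely imaginary form is forced a priori. (Alternatively, one may invoke the coincidence, mentioned in the paper, between the Birkhoff constants and the ``periodicity conditions'' computed in \cite[Sec.~6.1]{CGM13} for maps of this shape and just specialize their formula.)
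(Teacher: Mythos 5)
Your overall strategy is exactly the one the paper has in mind: the paper merely records that the lemma ``follows after simple computations'', meaning translate the fixed point to the origin, diagonalize the linear part, and evaluate formula (\ref{E-B1-Gen}). However, your execution of the very first step contains an error that would derail the whole computation. For $F(x,y)=(y,-x+f(y))$ the Jacobian at a fixed point $(\bar{x},\bar{x})$ is $\left(\begin{smallmatrix}0&1\\-1&f'(\bar{x})\end{smallmatrix}\right)$, whose trace is $f'(\bar{x})$, not $2f'(\bar{x})$. Since $f'(\sqrt3/3)=1/2$, the eigenvalues satisfy $\lambda^2-\tfrac12\lambda+1=0$, i.e.\ $\lambda=\tfrac14\pm\tfrac{\sqrt{15}}{4}\,i$. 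This is \emph{not} $e^{\pm i\pi/3}$; in fact $\lambda$ is not a root of unity of any order (by Niven's theorem, $\cos\theta=1/4$ forces $\theta/\pi$ irrational), so the fixed point is not $k$-resonant for any $k$ --- which is stronger than, and different from, the sixth-root-of-unity picture you describe. Feeding $\lambda=e^{i\pi/3}$ into $\mathcal{P}_1(F)$ and (\ref{E-B1-Gen}) would produce a value different from $i\,135/256$, so as written the plan does not establish the stated equality.

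Once $\lambda$ is corrected the rest of your plan is sound and matches the paper's intent: the linear conjugation $\Phi(x,y)=(x/\lambda+\lambda y,\,x+y)$ used in Lemma \ref{l-mira}(i) does bring the translated map to the form (\ref{E-F-diagonal}) with $\lambda+1/\lambda=1/2$; the coefficients $f_{i,j},g_{i,j}$ are read off from the order-three Taylor expansion of $\sqrt{y^2+1}$ at $\sqrt3/3$; and the a priori observation that $B_1$ must be purely imaginary for an area preserving map (via Lemma \ref{L-dynamicaFN-gen}) is a legitimate consistency check on the arithmetic.
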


\begin{lem}\label{L-lyness}
The  first Birkhoff constant of the Lyness map $F(x,y)=(y,(a+y)/x)$, with $a\in
(-1/4,\infty)\setminus \{0,1\},$ at its elliptic fixed points is $B_1=i\,b_1\ne0,$ for some
$b_1\in\R$.
\end{lem}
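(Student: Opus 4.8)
The plan is to reduce Lemma~\ref{L-lyness} to the computation of the first Birkhoff constant $B_1$ via formula~(\ref{E-B1-Gen}), following exactly the strategy used for the MGM map in Lemma~\ref{l-mira}(i). First I would locate the fixed points of $F(x,y)=(y,(a+y)/x)$: they are $p=(\bar x,\bar x)$ with $\bar x$ a root of $\bar x^2=a+\bar x$, i.e. $\bar x=(1\pm\sqrt{1+4a})/2$, which are real and nonzero precisely because $a\in(-1/4,\infty)\setminus\{0\}$. Next I would compute the eigenvalues of $DF(p)$: a short calculation gives a characteristic polynomial of the form $\lambda^2-\tau\lambda+1=0$ with $\tau=\tau(a)$ depending rationally on $\bar x$, and one checks that $|\tau|<2$ for $a$ in the stated range, so the eigenvalues are $\lambda,\bar\lambda=1/\lambda$ on the unit circle; the excluded value $a=1$ is exactly where a $3$-resonance occurs (there $\lambda$ is a primitive cube root of unity, which must be removed since~(\ref{E-B1-Gen}) has $\lambda^2+\lambda+1$ in the denominator), and $a=0$ is already excluded by the fixed point condition.

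Having fixed an elliptic non-$3$-resonant $p$, I would introduce the rational parametrization $\tau=\lambda+1/\lambda$ with $\lambda=\mathrm{e}^{i\theta}$, translate $p$ to the origin, and apply a linear change of coordinates (of the same shape $\Phi(x,y)=(x/\lambda+\lambda y, x+y)$ as in Lemma~\ref{l-mira}) bringing $F$ into the diagonal normal form~(\ref{E-F-diagonal}). Then I would read off the needed Taylor coefficients $f_{ij},g_{ij}$ up to order two, plug them into $\mathcal{P}_1(F)$ and hence into~(\ref{E-B1-Gen}), and simplify. The key point to verify is twofold: that $B_1$ is \emph{purely imaginary} — which is automatic here because $F$ is area/measure preserving (density $\nu(x,y)=1/(xy)$ near $p$), so by the argument in Lemma~\ref{L-dynamicaFN-gen} one cannot have $\mathrm{Re}(B_1)\neq0$; and that $B_1\neq0$ — which requires showing the resulting rational function of $\lambda$ (equivalently of $a$) does not vanish identically nor at any admissible value.

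Concretely, I expect the computation to yield something like $B_1 = i\,b_1(a)$ with $b_1(a)$ an explicit algebraic function of $a$ on $(-1/4,\infty)\setminus\{0,1\}$, and the remaining task is to check $b_1(a)\neq0$ throughout this set. The main obstacle is precisely this non-vanishing verification: one must confront the explicit expression for $\mathcal{P}_1(F)$ and argue that its numerator, after clearing the denominator $\lambda^2(\lambda-1)(\lambda^2+\lambda+1)$, has no zeros corresponding to $|\lambda|=1$, $\lambda\neq 1$, $\lambda^3\neq1$ — equivalently no zeros for $a\in(-1/4,\infty)\setminus\{0,1\}$. This can be done either by exhibiting the closed form of $b_1(a)$ and analyzing its (finitely many, possibly none) real zeros directly, or by a symbolic resultant/factorization argument showing the numerator and $(\lambda^2+\lambda+1)$ (and the other excluded factors) exhaust all its roots on the unit circle. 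Everything else is the routine symbolic manipulation already carried out in~\cite{CGM13,CGM13a} for Lyness-type maps, which is why the lemma is stated without detailing the proof.
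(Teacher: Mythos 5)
Your overall strategy---diagonalizing $DF(p)$, reading off the quadratic coefficients, and evaluating $B_1$ through~(\ref{E-B1-Gen}) exactly as in Lemma~\ref{l-mira}(i)---is the right one, and it is essentially what underlies the paper's undetailed proof, which simply invokes the computations of~\cite[Sec.~6.1]{CGM13} (where the periodicity conditions coincide with the Birkhoff constants). Your argument that $B_1$ must be purely imaginary, via measure preservation (or integrability) together with Lemma~\ref{L-dynamicaFN-gen}, is also fine. The problem lies in your analysis of the excluded parameters $a\in\{0,1\}$, and it undermines precisely the step you correctly single out as the essential one, namely the non-vanishing of $b_1(a)$.

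At $p=(\bx,\bx)$ one computes $DF(p)=\left(\begin{smallmatrix}0&1\\-1&1/\bx\end{smallmatrix}\right)$, so $\lambda+1/\lambda=1/\bx$. For $a=1$ the elliptic fixed point has $\bx=(1+\sqrt5)/2$ and $1/\bx=2\cos(2\pi/5)$: $\lambda$ is a primitive \emph{fifth} root of unity, not a cube root, and the map is globally $5$-periodic. For $a=0$ the fixed point $(1,1)$ exists perfectly well (it is the nonzero root of $\bx^2=\bx$, so $a=0$ is \emph{not} ``excluded by the fixed point condition'') and gives $1/\bx=1=2\cos(\pi/3)$, a primitive sixth root of unity, the map being globally $6$-periodic. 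In neither case does the denominator $\lambda^2(\lambda-1)(\lambda^2+\lambda+1)$ of~(\ref{E-B1-Gen}) vanish; in fact, since $1/\bx>0$ at the elliptic fixed point with $\bx>0$, $\lambda$ is never a cube root of unity for any $a>-1/4$, so the formula applies throughout, including at $a=0,1$, where it returns $B_1=0$ because globally periodic maps are linearizable and all their Birkhoff constants vanish---this is exactly the reason the paper gives, right after the lemma, for removing $a\in\{0,1\}$ from the statement. Consequently your final verification is set up on a false premise: $b_1(a)$ \emph{does} vanish at $a=0$ and $a=1$, and the actual content of the lemma is that these are its only zeros on $(-1/4,\infty)$. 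That check---exhibiting $b_1(a)$ in closed form and locating all of its real zeros, rather than arguing that no admissible zeros can occur---is the whole point, and it is left undone in your proposal.
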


In fact, it is well known that the map has elliptic fixed points only when $a>-1/4$. Moreover,
when $a=0$ (resp. $a=1$), it is 6-periodic (resp. 5-periodic) and so linearizable. Hence, when
$a\in\{0,1\}$,  $B_n=0$ for all $n\in\N.$

\section{Proof of Theorem \ref{T-Preserva-mesura}}\label{SS-prova}

One key step in the proof of Theorem \ref{T-Preserva-mesura} is
that, under its hypotheses, the map $F$ should have a \emph{Lie
Symmetry}. A vector field $X$ is said to be a Lie symmetry of a map
$F$ if it satisfies the condition
\begin{equation}\label{E-Lie-Symm}
{X}(F({x}))=DF({x})\cdot{X}({x}),
\end{equation}
where $DF$ is the jacobian matrix of $F$, \cite{CGM08,HBQC}. The vector field ${X}$ is related
with the dynamics of the map since $F$ maps any orbit of the differential system determined by the
vector field, to another orbit of this system, see \cite{CGM08}. In the integrable case, where the
dynamics are in fact one dimensional, the existence of a Lie symmetry  fully characterizes the
dynamics. In \cite[Thm. 1]{CGM08} we prove:

\begin{teo}\label{rotation}
Let  $X$ be a  $\mathcal{C}^1$-Lie Symmetry of a  $\mathcal{C}^1$-diffeomorphism $F:\U\to \U$. Let
$\gamma$ be an orbit of $X$ invariant under $F$. Then, $\left.F\right|_\g$ is the $\tau$-time map
of the flow of $X$, that is $F({x})=\varphi(\tau,{x}),\,x\in\g.$ Moreover,
\begin{itemize}
\item[(a)] If $\g\cong\{p\}$   then $p$ is a fixed point of
$F.$
\item[(b)] If $\g\cong \su$, then  $\left.F\right|_\g$ is conjugated to a rotation,
with rotation number  $\theta=\tau/T,$ where $T$ is the period of $\g$.
\item[(c)] If
$\g\cong\R$,  then  $\left.F\right|_\g$ is conjugated to a translation on the line.
\end{itemize}
\end{teo}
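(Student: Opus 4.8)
The plan is to exploit the defining identity \eqref{E-Lie-Symm} to show that $F$ conjugates the flow $\varphi$ of $X$ to a time-shift of itself along the invariant orbit $\g$, and then read off the three cases from the classification of the phase portrait of a one-dimensional flow. First I would observe that, since $\gamma$ is an orbit of $X$ and $F(\gamma)=\gamma$ (this is what ``invariant under $F$'' means), the restriction $\left.F\right|_\g$ is a diffeomorphism of $\gamma$ onto itself. The key computation is to differentiate the flow identity: set $\psi(t,x):=F(\varphi(t,x))$ and $\chi(t,x):=\varphi(t,F(x))$ for $x\in\g$. Both are curves on $\g$; I would check that each satisfies the same ODE $\dot u = X(u)$. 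For $\chi$ this is immediate from the definition of the flow. For $\psi$, differentiating in $t$ gives $\partial_t\psi = DF(\varphi(t,x))\cdot X(\varphi(t,x))$, which by \eqref{E-Lie-Symm} (applied at the point $\varphi(t,x)$) equals $X(F(\varphi(t,x)))=X(\psi(t,x))$. Hence $\psi$ and $\chi$ solve the same autonomous ODE; since at $t=0$ they share the initial condition $F(x)$, uniqueness of solutions (guaranteed by $X\in\mathcal{C}^1$) yields
\[
F(\varphi(t,x))=\varphi(t,F(x))\qquad\text{for all }x\in\g,\ t\in\R.
\]

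Next I would use this commutation relation to produce the claimed constant $\tau$. Fix a base point $x_0\in\g$ and a local parametrization $t\mapsto\varphi(t,x_0)$ of $\g$ by the flow. Because $F(x_0)\in\g$, there is some $\tau$ (well-defined modulo the period $T$ in the periodic case, and uniquely in the line and point cases) with $F(x_0)=\varphi(\tau,x_0)$. Then for a general point $x=\varphi(s,x_0)$ of $\g$, the commutation relation gives
\[
F(x)=F(\varphi(s,x_0))=\varphi(s,F(x_0))=\varphi(s,\varphi(\tau,x_0))=\varphi(\tau,\varphi(s,x_0))=\varphi(\tau,x),
\]
so $\left.F\right|_\g$ is exactly the $\tau$-time map of $X$ on $\g$, which is the main assertion.

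It then remains to treat (a), (b), (c) according to the topological type of $\g$, using that a non-trivial orbit of a $\mathcal{C}^1$ planar vector field is homeomorphic either to a point, to $\su$, or to $\R$. In case (a), $\g=\{p\}$ is a singular point of $X$, and $F(p)=\varphi(\tau,p)=p$, so $p$ is a fixed point. In case (b), $\g\cong\su$ is a periodic orbit of minimal period $T>0$; the map $t\mapsto\varphi(t,x_0)$ descends to a diffeomorphism $\R/T\Z\to\g$, i.e.\ a coordinate $\z\in\R/\Z$ via $\z=t/T$, in which $\left.F\right|_\g$ becomes $\z\mapsto\z+\tau/T$, that is, conjugated to the rigid rotation of angle $\tau/T$. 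In case (c), $\g\cong\R$ and $t\mapsto\varphi(t,x_0)$ is a global diffeomorphism $\R\to\g$ (the orbit carries no periodic point, so the flow parametrization is injective and proper onto $\g$); in this coordinate $\left.F\right|_\g$ is $t\mapsto t+\tau$, a translation.

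I do not expect a serious obstacle here: the content is entirely the differentiation-plus-uniqueness argument for the commutation identity, and the only points requiring a line of care are (i) justifying that the flow $\varphi$ is defined for all relevant times along $\g$ — which holds because $\g$ is a single orbit, so the flow through any of its points stays on $\g$ and the maximal interval can be handled orbit by orbit (in the compact cases $\su$ and $\{p\}$ completeness is automatic; in the case $\g\cong\R$ one parametrizes $\g$ itself by its time variable) — and (ii) noting that $\tau$ is only well-defined modulo $T$ in the periodic case, which is exactly why the rotation number in (b) is stated as $\z=\tau/T$ and not $\tau$ itself. The regularity hypothesis $X\in\mathcal{C}^1$ is used precisely once, for uniqueness of solutions of $\dot u = X(u)$, and the hypothesis that $F$ is a diffeomorphism guarantees that $\left.F\right|_\g$ is genuinely invertible, so that the conjugacies in (b) and (c) are conjugacies and not merely semiconjugacies.
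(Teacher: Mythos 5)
Your argument is correct, and it is essentially the canonical proof: the paper itself does not prove Theorem~\ref{rotation} but quotes it from \cite[Thm.~1]{CGM08}, where the proof proceeds exactly as you do, by differentiating $t\mapsto F(\varphi(t,x))$, invoking \eqref{E-Lie-Symm} and uniqueness of solutions to get the commutation $F\circ\varphi_t=\varphi_t\circ F$, and then reading off the time-$\tau$ map and the three cases from the topological type of the orbit. Your added remarks on domains of definition and on $\tau$ being determined only modulo $T$ in the periodic case are the right points of care and introduce no gap.
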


It can be seen that if  $F$ has a first integral $V\in \mathcal{C}^{m+1}(\U)$ on $\U\subset\R^2$
 and it  preserves a measure  absolutely continuous with respect the
Lebesgue measure with non-vanishing density $\nu\in\mathcal{C}^{m}$ in $\U$, it holds that
\begin{equation}\label{E-hamiltonian}
{X}(x,y)=\mu(x,y)\,\left(- V_y(x,y)\frac{\partial}{\partial x}+V_x(x,y) \frac{\partial}{\partial
y}\right),\quad\mbox{where}\quad \mu(x,y)=\frac 1{\nu(x,y)},
\end{equation}
is a Lie symmetry of $F$ in $\U,$ see again~\cite{CGM08}.  Observe that in  case that $F$ is an
area preserving map then $\mu\equiv 1$ and the symmetry (\ref{E-hamiltonian}) is simply the
hamiltonian vector field associated to~$V$.

We will use the following corollary of the above results.

\begin{corol}\label{cor-rot}
Let $F$ be  a $\mathcal{C}^2$-measure preserving map  with an
invariant measure with non-vanishing density
$\nu\in\mathcal{C}^{1}(\U)$ and  with a first integral $V\in
\mathcal{C}^{2}(\U)$. If $V$ has a connected component $\gamma$ of
an invariant level set $\{V( x)=h\}$, which is  invariant by $F$ and
diffeomorphic to $\su$, and $\left.F\right|_\g$ has rotation number
$\theta=\frac{\tau}{T}=p/q\in\Q,$ with $\gcd(p,q)=1$, then  $F$ has
a continuum of $q$-periodic points in $\g\subset\U$.
\end{corol}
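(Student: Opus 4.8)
The plan is to apply Theorem \ref{rotation} to the Lie symmetry provided by \eqref{E-hamiltonian}, and then exploit the rationality of the rotation number on $\gamma$ together with the fact that a rotation by $p/q$ has \emph{all} its points periodic of period $q$. First I would observe that, by hypothesis, $F$ is a $\mathcal{C}^2$ measure preserving map with density $\nu\in\mathcal{C}^1(\U)$ and first integral $V\in\mathcal{C}^2(\U)$; hence by the discussion around \eqref{E-hamiltonian}, the vector field
\[
X(x,y)=\frac{1}{\nu(x,y)}\left(-V_y(x,y)\frac{\partial}{\partial x}+V_x(x,y)\frac{\partial}{\partial y}\right)
\]
is a $\mathcal{C}^1$ Lie symmetry of $F$ on $\U$. (Note $X$ is $\mathcal{C}^1$ because $V\in\mathcal{C}^2$ and $\nu\in\mathcal{C}^1$ is non-vanishing.) The level set component $\gamma$ is an integral curve of $X$: indeed $X$ is tangent to the level curves of $V$, so the connected component $\gamma$ of $\{V=h\}$ is contained in an orbit of $X$, and since $\gamma\cong\su$ is compact it is exactly a periodic orbit of $X$. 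Moreover $\gamma$ is invariant under $F$ by assumption.

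Next I would invoke Theorem \ref{rotation} applied to this orbit $\gamma\cong\su$: it gives that $\left.F\right|_\gamma$ is the $\tau$-time map of the flow of $X$ and is conjugated to a rotation with rotation number $\theta=\tau/T$, where $T$ is the period of $\gamma$. By hypothesis $\theta=p/q$ with $\gcd(p,q)=1$. Now the key elementary fact: a rotation of the circle by angle $2\pi p/q$ satisfies $R^q=\mathrm{id}$, and since $\gcd(p,q)=1$ no smaller iterate is the identity; hence \emph{every} point of $\su$ is periodic with minimal period exactly $q$. Transporting this through the conjugacy, every point of $\gamma$ is a $q$-periodic point of $F$ (with minimal period $q$). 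Since $\gamma$ is diffeomorphic to $\su$, it is a one-dimensional continuum contained in $\U$, so $F$ has a continuum of $q$-periodic points in $\gamma\subset\U$, as claimed.

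I do not expect a genuine obstacle here; the statement is essentially a packaging of Theorem \ref{rotation} plus the triviality that rational rotations are periodic. The one point that deserves a line of care is the identification of $\gamma$ as a single periodic orbit of $X$ rather than merely a union of orbits: this is where one uses that $\gamma$ is a \emph{connected} component of the level set, that $X$ does not vanish on $\gamma$ (which follows since $p$ is an isolated non-degenerate critical point of $V$ and $\gamma$ surrounds $p$, so $\nabla V\neq 0$ on $\gamma$ and $\nu\neq 0$), and that $\gamma$ is compact, forcing the orbit to close up with $\gamma$ itself as its image. Everything else is immediate from the cited results.
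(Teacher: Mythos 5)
Your proposal is correct and follows essentially the same route as the paper, which states this result as a direct corollary of Theorem \ref{rotation} applied to the Lie symmetry \eqref{E-hamiltonian}: the level curve $\gamma$ is a closed orbit of $X$ invariant under $F$, so $\left.F\right|_\gamma$ is conjugate to the rotation by $p/q$, all of whose points are $q$-periodic. Your extra remark on why $X$ is non-vanishing on $\gamma$ (so that $\gamma$ is a single periodic orbit) is a reasonable point of care, already implicit in the hypothesis that the rotation number $\theta=\tau/T$ is defined on $\gamma$.
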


Notice that if $p$ is a fixed point of $F,$ then from
(\ref{E-Lie-Symm}), $X(p)=DF(p)\,X(p).$ If the matrix $DF(p)$ has no
the eigenvalue $\lambda=1$ then $X(p)$ must be zero, that is, $p$ is
a singular point of the vector field.

\begin{lem}\label{P-Ls-no-degenerate}
Let $F$ be a  $\mathcal{C}^m$-planar  map that preserves an
absolutely continuous measure with respect the Lebesgue's one with
non-vanishing density $\nu\in\mathcal{C}^{m}$, with $m\geq 2$, which
is locally integrable at an elliptic fixed point $p$  with a
$\mathcal{C}^m$-local first integral $V$. Then its associated Lie
symmetry (\ref{E-hamiltonian}) has  a non-degenerate center at $p.$
\end{lem}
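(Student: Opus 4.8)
The plan is to show that the Lie symmetry $X$ of \eqref{E-hamiltonian} has an isolated singular point at $p$ whose linearization $DX(p)$ has eigenvalues $\pm i\omega$ with $\omega\neq 0$, and that $p$ is a center for $X$; these two facts together say precisely that $p$ is a non-degenerate center. First I would recall from the definition of $\mathcal{C}^m$-local integrability that $p$ is an isolated non-degenerate critical point of $V$ in $\U$, so after a translation we may assume $p=0$ and, by the Morse lemma, $V(x,y)=V(0)+Q(x,y)+\text{h.o.t.}$ with $Q$ a non-degenerate quadratic form. Since all level sets $\{V=h\}\cap\U$ are closed curves surrounding $p$, the form $Q$ must be (positive or negative) definite; without loss of generality $Q$ is positive definite, so after a linear change we may take $V(x,y)=\tfrac12(a x^2+\text{cross terms})+\cdots$ with Hessian $H:=D^2V(0)$ a symmetric positive definite matrix. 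Then $X(x,y)=\mu(x,y)\,J\,\nabla V(x,y)$ with $J=\left(\begin{smallmatrix}0&-1\\1&0\end{smallmatrix}\right)$ and $\mu=1/\nu$ smooth and non-vanishing near $0$. Clearly $X(0)=0$.

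Next I would compute the linear part: since $\nabla V(0)=0$, the Jacobian is $DX(0)=\mu(0)\,J\,H$. As $H$ is symmetric positive definite and $\mu(0)\neq 0$, the matrix $\mu(0)JH$ has purely imaginary eigenvalues $\pm i\omega$ with $\omega=|\mu(0)|\sqrt{\det H}\neq 0$; indeed the eigenvalues of $JH$ are $\pm i\sqrt{\det H}$ because $JH$ has zero trace (as $\mathrm{tr}(JH)=0$ for any symmetric $H$) and determinant $\det(JH)=\det J\det H=\det H>0$. Hence $DX(0)$ is of the required non-degenerate type, so in particular $0$ is an isolated singular point of $X$.

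It remains to check that $0$ is genuinely a \emph{center} of $X$ (not a focus) and to identify the period annulus. For this I would use that $V$ is a first integral of $X$: along any trajectory of $X$ one has $\dot V=\nabla V\cdot X=\mu\,\nabla V\cdot (J\nabla V)=0$. Thus the orbits of $X$ near $0$ lie on the level curves $\{V=h\}$, which by hypothesis are closed curves surrounding $0$ contained in $\U$. Combined with the fact that $X$ has no singular points on these curves for $h$ close to $V(0)$ (because $0$ is an isolated critical point of $V$ and an isolated singular point of $X$), each such level curve is a single periodic orbit of $X$. Therefore a full punctured neighborhood of $0$ is foliated by periodic orbits, so $0$ is a center, and it is non-degenerate by the eigenvalue computation above.

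The only mildly delicate point is the normalization step: one must be careful that "all level curves surrounding $p$" forces the Hessian of $V$ at $p$ to be definite (so that $JH$ has non-real eigenvalues) rather than indefinite; this is exactly where the hypothesis that $p$ is a \emph{non-degenerate} critical point with nearby level sets being closed curves is used, and it is the one place where a saddle-type $V$ would be excluded. Everything else is a direct computation with the formula \eqref{E-hamiltonian}. \qed
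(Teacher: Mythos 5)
Your proof is correct and follows essentially the same route as the paper: translate $p$ to the origin, use the non-degeneracy of the critical point of $V$ together with the closedness of the level curves to get a definite quadratic part, compute $DX(p)=\mu(p)J\,D^2V(p)$ to obtain purely imaginary non-zero eigenvalues, and conclude the center character from the fact that the orbits of $X$ lie on the closed level curves of $V$. The only difference is cosmetic (matrix form $\mu_0 JH$ versus explicit coordinates) plus a somewhat more explicit justification of the final "hence a center" step, which the paper states more briefly.
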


\begin{proof} Without loss of generality, we assume that $p=0$,
$V(0)=0$ and $V(x,y)>0$ in a neighborhood of the origin. Since $0$ is a non-degenerate critical
point of $V$ we get $ V(x,y)=ax^2+bxy+cy^2+o(|x,y|^3),$ with $=4ac-b^2>0.$

Moreover $\mu_0:=\mu(0)\neq 0$. Hence, from (\ref{E-hamiltonian}) we have that
$$
X(x,y)= -\mu_0\,(bx+2cy)+o(|x,y|^2) \frac{\partial}{\partial
x}+\mu_0\,(2a x+ by)+o(|x,y|^2) \frac{\partial}{\partial y}
$$
and therefore $\mbox{Spec}(DX(0))=\{\pm i\mu_0\,\sqrt{4ac-b^2}\neq
0\}$. Since $\{V=h\}$ are closed curves for $h>0$ small enough, we
get that $X$ has a non-degenerate center at $0.$~\end{proof}

 \begin{lem}\label{L-variacionals} Let $F$ be a  $\mathcal{C}^m$-planar  map that preserves
 an absolutely continuous measure with
respect the Lebesgue's one with non-vanishing density $\nu\in\mathcal{C}^{m}$, with $m\geq 2$,
which is locally integrable at an elliptic fixed point $p$  with a $\mathcal{C}^m$-local first
integral $V$. Let $\theta(h)$ and $T(h)$ denote, respectively, the rotation number of $F$ and the
period function  of the Lie symmetry~(\ref{E-hamiltonian}) evaluated on $\{V(x,y)=h\}$. Set
$h_p=V(p)$. Then
\begin{equation}\label{DFDX}
DF(p)=e^{\tau_p\, DX(p)},
\end{equation}
where $\tau_p=\theta_p\,T_p$, with $T_p=\lim\limits_{h\to h_p}
T(h)$, and $\theta_p=\lim\limits_{h\to h_p} \theta(h)$ is the
rotation number of the linear map $L(q)=DF(p)\,q$.
\end{lem}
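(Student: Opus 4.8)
The plan is to use Theorem~\ref{rotation} applied to the orbit $\gamma\cong\{p\}$ of the Lie symmetry $X$ from~(\ref{E-hamiltonian}), together with the fact that $F$ restricted to nearby orbits is the $\tau$-time map of the flow of $X$, and then to take limits as the orbits shrink to $p$. First I would note that by Lemma~\ref{P-Ls-no-degenerate} the vector field $X$ has a non-degenerate center at $p$, so on a punctured neighborhood of $p$ the level sets $\{V=h\}$ are, for $h$ close to $h_p$, precisely the periodic orbits $\gamma_h$ of $X$, each diffeomorphic to $\su$ and each invariant under $F$ (because $F$ preserves the levels of $V$). By part~(b) of Theorem~\ref{rotation}, $\left.F\right|_{\gamma_h}$ is conjugated to a rotation with rotation number $\theta(h)=\tau(h)/T(h)$, where $\tau(h)$ is the time such that $\left.F\right|_{\gamma_h}=\varphi(\tau(h),\cdot)$ and $T(h)$ is the period of $\gamma_h$. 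This already identifies $\theta(h)$ as claimed.

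Next I would establish the key identity~(\ref{DFDX}) by a limiting/continuity argument. On each orbit $\gamma_h$ we have $F(x)=\varphi(\tau(h),x)$ for $x\in\gamma_h$; differentiating this relation and the defining identity of a Lie symmetry, one sees that $\tau(h)$ depends continuously on $h$ (this follows from the regularity of the flow of $X$ and of $F$, and the fact that $\tau(h)=\theta(h)T(h)$ with $T$ continuous up to $p$ by Theorem~\ref{T-regularitat-periode} and $\theta$ the rotation number, which varies continuously as the orbits vary continuously). Hence $\tau(h)\to\tau_p:=\theta_p\,T_p$ as $h\to h_p$, where $T_p=\lim_{h\to h_p}T(h)=2\pi/\omega$ is the value of the period function at the center (guaranteed finite and well-defined by Theorem~\ref{T-regularitat-periode}) and $\theta_p=\lim_{h\to h_p}\theta(h)$. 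Now apply $F=\varphi(\tau(h),\cdot)$ at points of $\gamma_h$, differentiate in the space variable, and let $h\to h_p$: by continuity of $DF$ and of $D\varphi$, and since $\gamma_h\to\{p\}$, the left side tends to $DF(p)$ and the right side tends to $D\varphi(\tau_p,p)=e^{\tau_p\,DX(p)}$, the last equality because $p$ is a singular point of $X$ (it is a center), so the variational equation along the constant solution at $p$ is the linear system $\dot u=DX(p)\,u$ whose time-$\tau_p$ flow is exactly the matrix exponential. This gives $DF(p)=e^{\tau_p\,DX(p)}$.

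Finally I would identify $\theta_p$ as the rotation number of the linear map $L(q)=DF(p)\,q$. Since $\mathrm{Spec}(DX(p))=\{\pm i\,\omega'\}$ with $\omega'=\mu_0\sqrt{4ac-b^2}\ne0$ and $T_p=2\pi/|\omega'|$, the matrix $e^{\tau_p\,DX(p)}$ is conjugate to a rotation by angle $\pm\tau_p\,\omega'=\pm 2\pi\theta_p$; hence $DF(p)$, being equal to this matrix, is a rotation (in suitable coordinates) with rotation number $\theta_p$. Alternatively, and more cleanly, one observes directly that the rotation number of $\left.F\right|_{\gamma_h}$ is a topological conjugacy invariant that varies continuously and converges, as $\gamma_h$ shrinks to $p$, to the rotation number of the linearization $DF(p)$ acting on the tangent circle; this limit is by definition $\theta_p$. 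The main obstacle, and the point that needs the most care, is justifying the continuity of $\tau(h)$ (equivalently of $\theta(h)$) up to $h=h_p$ and the interchange of the limit $h\to h_p$ with the differentiation defining $DF$; both rest on Theorem~\ref{T-regularitat-periode}, which guarantees that the period function extends continuously (indeed $\mathcal{C}^{m-1}$) to $p$, together with the $\mathcal{C}^{m-1}$-regularity of the flow of the $\mathcal{C}^m$ vector field $X$.
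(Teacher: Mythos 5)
Your proposal is correct and follows essentially the same route as the paper: differentiate the identity $F(q)=\varphi(\tau(V(q)),q)$, let $q\to p$, identify $\lim_{q\to p}D_q\varphi(\tau(V(q)),q)$ with $\mathrm{e}^{\tau_p DX(p)}$ via the variational equations and continuity with respect to parameters, and use the continuity of $T$ and $\theta$ up to the center (Lemma~\ref{P-Ls-no-degenerate} and Theorem~\ref{T-regularitat-periode}) to get $\tau_p=\theta_p T_p$. The only step you gloss over is that the space derivative of $\varphi(\tau(V(q)),q)$ carries the extra term $X(F(q))\,\tau'(V(q))\,\vec{\nabla}V(q)$, which the paper kills in the limit by observing that $X(p)=0$; also, where you identify $\theta_p$ with the rotation number of $L(q)=DF(p)\,q$ directly from the spectrum of $DX(p)$, the paper simply invokes \cite[Prop.~8]{BR3}.
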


\begin{proof}
Since the vector field (\ref{E-hamiltonian}) is a Lie symmetry of $F$, we know by
Theorem~\ref{rotation} that
$$
F(q)=\varphi(\tau(h),q)\mbox{ for all }q\in\U,\mbox{ where } h=V(q).
$$
By differentiation we arrive to
\begin{align*}
DF(q) &=\displaystyle{\frac{d\varphi}{dt}}(\tau(V(q)),q)\tau'(V(q))\vec{\nabla}V(q)+D_q
    \varphi(\tau(V(q)),q) \\
&     =X(F(q))\tau'(V(q))\vec{\nabla}V(q)+D_q \varphi(\tau(V(q)),q).
\end{align*}
Taking the limit as $q\to p$ and using that $X(p)=0$ we get
\begin{equation}\label{limit} DF(p)=\lim _{q\to p}\,D_q
\varphi(\tau(V(q)),q).\end{equation} Using the variational equations
we know that $M(t):=D_q \varphi(t,q),$ is the solution of
$$
        \dot{M}(t)=DX(\varphi(t,q))\, M(t),\quad
        M(0)=\,Id.
$$
Considering $q$ as a parameter in the above equation and using the
theorem of continuity respect parameters, when $q\to p,$ the
solution of the above equation tends to the solution of
$$
        \dot{M}(t)=DX(p)\, M(t),\quad
        M(0)=\,Id,
$$
that is, $\lim_{q\to p}\,D_q \varphi(t,q)$ is the fundamental matrix of the above linear system
(which has constant coefficients) that at $t=0$ is the identity. As usual, we denote this matrix
by $e^{tDX(p)}.$

Now recall that $\theta(V(q))=\tau(V(q))/T(V(q))$ and that, from Lemma~\ref{P-Ls-no-degenerate},
$p$ is a non-degenerate center, and hence there exists $\lim\limits_{q\to p}T(V(q))=T_p\neq 0$.
Moreover, from \cite[Prop. 8]{BR3}, $\lim\limits_{h\to h_p} \theta(h)=\theta_p,$ where $\theta_p$
is the  rotation number of the linear map $L(q)=DF(p)\,q$.  Hence, from (\ref{limit}) we have
$DF(p)=e^{\tau_p DX(p)},$ with $\tau_p=\lim\limits_{q\to p}\tau(V(q))=\lim\limits_{q\to
p}\theta(V(q))\,T(V(q))=\theta_p T_p,$ as we wanted to prove.~\end{proof}

\begin{lem}\label{L-simetria-isocrona}
Let $X$ be a Lie Symmetry of class $\mathcal{C}^{m}(\U)$ of a map $F$ defined in an open set
$\U\subset\R^2$, having a non-degenerate center at $p\in\U$ and let $T$ be its corresponding
period function. Then
$$
Y(x,y)=T(x,y)\,X(x,y)
$$
is a $\mathcal{C}^{m-1}(\U)$ Lie Symmetry of $F$, having an
isochronous center at $p$.
\end{lem}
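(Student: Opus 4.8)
The plan is to verify the two claimed properties of $Y=T\cdot X$ separately. The regularity statement is immediate from Theorem~\ref{T-regularitat-periode}: since $X$ is of class $\mathcal{C}^m$ on $\U$ and $T$ is of class $\mathcal{C}^{m}$ on $\V\setminus\{p\}$ and of class $\mathcal{C}^{m-1}$ at $p$ (where $\V\subseteq\U$ is the period annulus of $X$ at $p$), the product $Y=T\,X$ is of class $\mathcal{C}^{m-1}$ on all of $\V$; outside $\V$ one can simply take $Y=T_{\max}\,X$ or argue that only a neighbourhood of $p$ is needed for the application. The isochronicity of $Y$ at $p$ is also conceptually clear: multiplying a planar vector field by a positive scalar function $f$ rescales time along each orbit, so the orbit of $Y$ through a point $q$ near $p$ is the same curve as the orbit of $X$ through $q$, but now traversed with period $\int_0^{T(q)} \frac{dt}{f(\varphi_X(t,q))}$; choosing $f=T$ and using that $T$ is constant along orbits of $X$ (it is a first integral of $X$) gives period $T(q)/T(q)=1$ for every orbit, hence an isochronous center of period $1$.

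The step that needs genuine care is the claim that $Y$ is again a Lie symmetry of $F$, i.e. that $Y(F(x))=DF(x)\cdot Y(x)$. First I would record that $T$ is a first integral of $F$: indeed, by Theorem~\ref{rotation} the map $F$ sends each periodic orbit $\gamma$ of $X$ to another orbit of $X$, and since $F$ is in particular a homeomorphism near $p$ it must send $\gamma$ to an orbit of the same period, so $T(F(x))=T(x)$ on $\V$ (one may also invoke Lemma~\ref{L-variacionals} or Theorem~\ref{rotation}(b) directly, noting $F|_\gamma=\varphi_X(\tau,\cdot)$ preserves $\gamma$ so actually $T\circ F=T$ on the whole period annulus). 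Granting this, the Lie symmetry identity for $Y$ follows by a one-line computation:
\[
Y(F(x))=T(F(x))\,X(F(x))=T(x)\,DF(x)\cdot X(x)=DF(x)\cdot\big(T(x)\,X(x)\big)=DF(x)\cdot Y(x),
\]
where the middle equality is the Lie symmetry hypothesis on $X$ and the fact that $T(x)$ is a scalar that commutes with the matrix $DF(x)$.

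The main obstacle is thus the justification that $T\circ F = T$, which must be handled with the right regularity: $T$ is only $\mathcal{C}^{m-1}$ at $p$, but the identity $T(F(x))=T(x)$ holds pointwise on $\V\setminus\{p\}$ by the orbit-preservation argument above and then extends to $p$ by continuity of $T$ and of $F$, so no differentiability of $T$ at $p$ is actually needed for this step. Finally, to confirm that $p$ is a \emph{non-degenerate} isochronous center of $Y$ — not merely that orbits near $p$ have period $1$ — I would note $Y(p)=T(p)\,X(p)=0$ and $DY(p)=T(p)\,DX(p)$, which has eigenvalues $T(p)\cdot(\pm i\omega)=\pm i$ since $T(p)=2\pi/\omega$; hence $DY(p)$ is a nondegenerate rotational linearization, and the period-$1$ annulus of $Y$ coincides with $\V$.
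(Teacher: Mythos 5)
Your proof is correct and follows essentially the same route as the paper: the regularity of $Y$ is taken from Theorem~\ref{T-regularitat-periode}, and the Lie-symmetry property is established by the identical chain $Y(F(q))=T(F(q))\,X(F(q))=T(q)\,X(F(q))=T(q)\,DF(q)\,X(q)=DF(q)\,Y(q)$, resting on $T\circ F=T$. You merely fill in the details the paper dismisses as ``trivial'' (the time-rescaling computation giving period $T(q)/T(q)=1$, and the justification that $F$ sends $X$-orbits to $X$-orbits of the same period); the only slip is that the eigenvalues of $DY(p)=T(p)\,DX(p)$ are $\pm 2\pi i$ rather than $\pm i$, which is harmless for non-degeneracy.
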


\begin{proof}
From Theorem \ref{T-regularitat-periode}, we can ensure that
$Y\in\mathcal{C}^{m-1}(\U)$. A trivial computation shows that it has
a non-degenerate \emph{isochronous} center at $p$. Now, the chain of
equalities
$$
Y(F(q))={T(F(q))}\,X(F(q))={T(q)}\,X(F(q))={T(q)}\,DF(q)\,X(q)=DF(q)\,
Y(q),
$$
show that $Y$ satisfies equation (\ref{E-Lie-Symm}), so it is another Lie Symmetry of
$F$.~\end{proof}

\begin{proof}[Proof of Theorem \ref{T-Preserva-mesura}] Assume that the map
has not continua of periodic points for a sequence of unbounded positive integer numbers
$\{N_k\}_k$. Assume also that $F$ is locally integrable at $p$ with a $\mathcal{C}^{2n+4}$ first
integral $V$. By definition,  the level curves $\{V=h\}\subset \U$ contain closed curves
surrounding $p$. Since $F$ has the associated Lie Symmetry $X\in\mathcal{C}^{2n+3}(\U)$ given
by~(\ref{E-hamiltonian}), and the energy level curves $\{V=h\}$ are also integral curves of
 ${X}$, the local integrability condition also implies that the
curves $\{V=h\}$ surrounding $p$ have no singular points of $X$.

From Theorem~\ref{rotation} (b), the map $\left.F\right|_{\{V=h\}\cap \U}$ is conjugate to a
rotation with associated rotation number $\theta(h)=\tau(h)/T(h),$ where $T(h)$ is the period of
each curve $\{V=h\}$ as an orbit of ${X}$, and $\tau(h)$ is defined by the equation
$F(q)=\varphi(\tau(h),q)$, where $\varphi$ is the flow of ${X}$. This last assertion ensures, in
particular, that $\theta(h)$ is a continuous function.

Let $h_0=V(p)$ be the energy of the fixed elliptic point. It is not restrictive to assume that
$V(q)\ge h_0$ on a neighborhood of $p$.   Let us see that the proof follows by using the next
claim:

\smallskip

\noindent \emph{Claim: $\theta(h)$ is a nonconstant continuous function on a neighborhood of
$h_0$.}

\smallskip

Assuming the above claim, there is a non-degenerate rotation
interval $I=\mathrm{Image}(\theta(h))$ for $h\gtrsim h_0$, and
therefore there exists $M_1\in\N$ such that for all $N\geq M_1$
there exists $M\in\N$ coprime with $N$ such that $h_N=M/N\in I$,
with $\{V=h_N\}\cap\U\ne\emptyset$ and such that  $\theta(h_N)=M/N$.
By Corollary~\ref{cor-rot}, the set  $\{V=h_N\}\cap\U$ is full of
$N$-periodic points of $F$, in  contradiction with our hypotheses.
So $F$ is not $\mathcal{C}^{2n+4}$-locally integrable at $p$.

Now we prove the claim by contradiction. Assume that the rotation
number is a constant function $\theta(h)\equiv \theta.$ Then each
map $\left.F\right|_{\{V=h\}\cap\U}$ is conjugate to a rotation with
the same rotation number $\theta$. We will prove  that $F$ is
globally $\mathcal{C}^{2n+2}$-conjugate on $\U$ to the linear map
$L(q)=DF(p)\,q$.

From Lemma~\ref{P-Ls-no-degenerate}, $X$ has a non-degenerate center at $p$. In consequence, by
Theorem \ref{T-regularitat-periode}, the period function $T(x,y)\in\mathcal{C}^{2n+2}(\U)$ and
$T(0,0)>0$.

Now we consider  the vector field
$$
Y(x,y)={T(x,y)}\,X(x,y),
$$
which, by Lemma \ref{L-simetria-isocrona}, is again a Lie Symmetry
of $F$ of class $\mathcal{C}^{2n+2}(\U)$, having an isochronous
center at $p$ with period function identically 1.

By using the isochronicity of $Y$ and the fact that the rotation number is constant, one gets
$F(q)=\widetilde{\varphi}(\widetilde{\tau},q)$ for all $q\in \U$, where $\widetilde{\tau}$ is a
constant, and $\widetilde{\varphi}$ is the flow of $Y$.

Now we can prove that the map given by
\begin{equation}\label{E_MBmap}
\Phi(q)=\int_{0}^{1} \mathrm{e}^{-DY(p)\,s}\, \widetilde{\varphi}(s,q)\,ds,
\end{equation}
is the desired conjugation between $F$ and the linear map $L$. We remark that this linearization
is the one given in the proof of the classical Bochner Theorem (\cite{Boch} and \cite[Chap. V,
Thm. 1]{Mont}). Also notice that $\Phi\in\mathcal{C}^{2n+2}(\U)$ because of the regularity of
$\widetilde{\varphi}$. Indeed, using that $DF(p)=\mathrm{e}^{DY(p)\widetilde{\tau} },$
see~(\ref{DFDX}) in Lemma~\ref{L-variacionals},  we get
\begin{equation}\label{E_MB}
\begin{array}{rl}
\Phi\circ F(q) & =\displaystyle{\int_{0}^{1 }} \mathrm{e}^{-DY(p)\,s}\,
\widetilde{\varphi}(s,\widetilde{\varphi}(\widetilde{\tau} ,q))\,ds =\displaystyle{\int_{0}^{1 }}
\mathrm{e}^{-DY(p)\,s}\,
\widetilde{\varphi}(s+\widetilde{\tau} ,q)\,ds \\
     & =\displaystyle{\int_{\widetilde{\tau} }^{\widetilde{\tau} +1 }} \mathrm{e}^{-DY(p)(u-\widetilde{\tau} )}\,
\widetilde{\varphi}(u,q)\,du=\mathrm{e}^{DY(p)\widetilde{\tau}
}\,\displaystyle{\int_{\widetilde{\tau} }^{\widetilde{\tau} +1 }}
\mathrm{e}^{-DY(p)\,u}\, \widetilde{\varphi}(u,q)\,du\\
& \overset{(*)}{ =}\mathrm{e}^{DY(p)\widetilde{\tau} }\,\displaystyle{\int_{0}^{1 }}
\mathrm{e}^{-DY(p)\,u}\, \widetilde{\varphi}(u,q)\,du\\&=\mathrm{e}^{DY(p)\widetilde{\tau}
}\,\Phi(q)= DF(p)\, \Phi(q)=L\circ \Phi(q),
  \end{array}
\end{equation}
where in the equality marked with $(*)$ we have used that both functions $\mathrm{e}^{-DY(p)u}$
and $\widetilde{\varphi}(u,q)$ are $1$-periodic with respect the variable $u$.

Hence,  on one hand we have proved that $F$ is $\mathcal{C}^{2n+2}$-conjugate to the linear map
$L$, and on the the other hand, $F$ is $\mathcal{C}^{2n+2}$-conjugate its Birkhoff normal form
$$F_{B}(z)=\mathrm{e}^{\theta
i}\,z\,\left(1+i\,b_n\,|z|^{2n}+o(|z|^{2n})\right),
$$ where $b_n \neq 0$ and $\lambda=\mathrm{e}^{\theta i}$, a contradiction because the vanishing of
the first non-zero Birkhoff constant $B_n$  is an invariant by
$\mathcal{C}^{2n+2}$-conjugations.~\end{proof}

\section{Non-integrability of the Cohen map}\label{S-Cohen}

The proof of Theorem \ref{T-Cohen} follows as a consequence of
Theorem \ref{T-Preserva-mesura}, by using Lemma \ref{L-B1-cohen} and
the next Proposition, which states that
 there are not continua of periodic orbits of the Cohen map
for all arbitrary large period. This result  is already given
in~\cite{L}. We give a  proof based on Theorem~\ref{teog}.

\begin{propo}\label{P-Cohen}
There are finitely many $N$-periodic points for the Cohen map when
 $N\neq \dot{3}$.
\end{propo}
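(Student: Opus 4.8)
The plan is to fit the periodic-point problem for the Cohen map into the framework of Theorem \ref{teog}, via Proposition \ref{L-sistemaN-OP}. First I would recall that the Cohen map comes from the second-order difference equation $x_{n+1} = -x_{n-1} + \sqrt{x_n^2+1}$. The presence of the square root is the obstruction to a polynomial recurrence, so the first step is to clear it: rewrite the relation as $x_{n+1}+x_{n-1} = \sqrt{x_n^2+1}$, note that the right-hand side is positive, and square to obtain the genuinely algebraic recurrence
\[
g(x_{n-1},x_n,x_{n+1}) := (x_{n+1}+x_{n-1})^2 - x_n^2 - 1 = 0,
\]
which has degree $d=2$. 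Every periodic orbit of $F$ satisfies this recurrence cyclically, so by Proposition \ref{L-sistemaN-OP} it suffices to show that the homogeneous system built from the degree-$2$ part $g_2(u,v,w) = (w+u)^2 - v^2 = (w+u-v)(w+u+v)$ has only the trivial solution $\mathbf{x}=\mathbf{0}$ in $\C^N$, where the $i$-th equation is $g_{i,2}(\mathbf{x}) = (x_{i+1}+x_{i-1}-x_i)(x_{i+1}+x_{i-1}+x_i)=0$ with indices taken mod $N$.

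Thus the heart of the matter is the following purely linear-algebra claim: if $(x_0,\dots,x_{N-1})\in\C^N$ satisfies, for every $i\bmod N$, either $x_{i+1}-x_i+x_{i-1}=0$ or $x_{i+1}+x_i+x_{i-1}=0$, then $\mathbf{x}=\mathbf{0}$, provided $3\nmid N$. To see this, suppose $\mathbf{x}\neq\mathbf{0}$; for each index $i$ pick a sign $\varepsilon_i\in\{+1,-1\}$ so that $x_{i+1}+\varepsilon_i x_i + x_{i-1}=0$. This is a linear recurrence for $(x_i)$ but with coefficients that may vary with $i$. I would analyze it as a product of $2\times2$ transfer matrices: writing $\binom{x_{i+1}}{x_i} = M_{\varepsilon_i}\binom{x_i}{x_{i-1}}$ with $M_{+}=\begin{pmatrix}-1 & -1\\ 1 & 0\end{pmatrix}$ and $M_{-}=\begin{pmatrix}1 & -1\\ 1 & 0\end{pmatrix}$, periodicity forces the monodromy $M_{\varepsilon_{N-1}}\cdots M_{\varepsilon_0}$ to have $1$ as an eigenvalue (with $\binom{x_0}{x_{N-1}}$ a corresponding eigenvector). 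Here $\det M_{\pm}=1$, $M_+$ has trace $-1$ (eigenvalues primitive sixth/third roots of unity, i.e. $M_+$ has order $6$, $M_+^3=-I$), and $M_-$ has trace $1$ (eigenvalues primitive sixth roots of unity as well, order $6$). The key point: if all signs are equal, say all $+$, the monodromy is $M_+^N$, whose eigenvalues are $N$-th powers of primitive sixth roots of unity — these equal $1$ iff $6\mid N$, but one checks the relevant condition is actually $3\mid N$ because $M_+^3=-I$ gives eigenvalue $1$ already when... — more carefully, $M_+^N=I$ iff $6\mid N$, so for a nonzero fixed vector we need $6\mid N$; the all-$-$ case is identical. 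The genuinely mixed case is the main obstacle, and I would handle it by observing that $M_- = S M_+ S^{-1}$ or by a direct substitution trick: the substitution $y_i = (-1)^{?}x_i$ does not quite conjugate the two, so instead I expect to argue via the \emph{quadratic form} preserved by both matrices. Both $M_+$ and $M_-$ lie in $\mathrm{SL}_2(\Z)$; one can check that the symmetric bilinear form associated with $Q(a,b)=a^2+ab+b^2$ is preserved by $M_+$ while $M_-$ preserves $a^2-ab+b^2$, so no single quadratic form works — hence I would instead track the orbit directly.

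An alternative and cleaner route for the mixed case, which I would ultimately prefer: argue that the sequence $(x_i)_{i\in\Z/N}$, being nonzero and satisfying $x_{i+1}+\varepsilon_i x_i+x_{i-1}=0$ with $\varepsilon_i\in\{\pm1\}$, forces strong constraints. Consider the three-term pattern and rescale by replacing $x_i$ with $\omega^{c_i}x_i$ for a cube root of unity $\omega$ and suitable integers $c_i$: the relations $x_{i+1}-x_i+x_{i-1}=0$ and $x_{i+1}+x_i+x_{i-1}=0$ become, after the substitution $x_i\mapsto(-\omega)^i x_i$ and $x_i\mapsto\omega^i x_i$ respectively, both equivalent to the \emph{constant-coefficient} three-term relation $z_{i+1}+z_i+z_{i-1}=0$ — this works because $(-\omega)$ and $\omega$ are precisely the roots of $t^2+t+1$ and $t^2-t+1$ up to the needed sign. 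Carrying such a global rescaling through the cycle is only consistent when a certain product of roots of unity around the loop equals $1$, and the obstruction to consistency is exactly divisibility by $3$: when $3\nmid N$ no nonzero solution survives, while when $3\mid N$ the genuine continuum $(1,1,-2,1,1,-2,\dots)$-type solutions (and scalings) appear, matching the exclusion $N\neq\dot3$ in the statement and the period-$3$-related islands seen numerically. Once the homogeneous system is shown to have only the trivial zero for $3\nmid N$, Proposition \ref{L-sistemaN-OP} immediately yields finitely many $N$-periodic points, completing the proof. The main obstacle, as indicated, is the bookkeeping in the mixed-sign case; I expect the cleanest write-up to phrase it as a statement about when a product of elements from $\{M_+,M_-\}\subset\mathrm{SL}_2(\Z)$ around a cycle of length $N$ can fix a nonzero vector, reduced mod $3$ where both matrices become the \emph{same} matrix of order $3$.
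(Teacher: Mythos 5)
Your reduction is exactly the paper's: square the recurrence to obtain $(x_{i-1}+x_{i+1})^2-x_i^2-1=0$, invoke Proposition~\ref{L-sistemaN-OP}, factor the degree-two part, and reduce everything to showing that each of the $2^N$ cyclic linear systems $x_{i+1}+\varepsilon_i x_i+x_{i-1}=0$, $\varepsilon_i\in\{\pm1\}$, has only the trivial solution when $3\nmid N$. The gap sits precisely in the one step that matters, the genuinely mixed sign patterns, and neither of your two sketches closes it as written. The rescaling $x_i\mapsto u_i z_i$ that is supposed to turn every sign pattern into the constant-coefficient relation forces $u_{i+1}=u_{i-1}=\varepsilon_i u_i$, which is consistent around the cycle only when the $\varepsilon_i$ are all equal; so that route handles nothing beyond the two constant-sign cases (where, incidentally, your bookkeeping is off: $M_+$ has trace $-1$, characteristic polynomial $\lambda^2+\lambda+1$, hence $M_+^3=I$ and order $3$, while it is $M_-$ that has order $6$ with $M_-^3=-I$).

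Your fallback, ``reduce mod $3$ where both matrices become the same matrix of order $3$,'' has the modulus wrong: $M_+$ and $M_-$ differ by $2$ in a single entry, so they coincide modulo $2$, not modulo $3$. Over $\mathbb{F}_2$ both become the Fibonacci matrix $\left(\begin{smallmatrix}1&1\\1&0\end{smallmatrix}\right)$, whose characteristic polynomial $\lambda^2+\lambda+1$ is irreducible over $\mathbb{F}_2$ with roots of multiplicative order $3$ in $\mathbb{F}_4$; hence the integer $\det(M_{\varepsilon_{N-1}}\cdots M_{\varepsilon_0}-I)$ is odd exactly when $3\nmid N$, independently of the sign pattern, which is what you need. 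With that single correction your transfer-matrix argument does close, and it is then the same parity trick as the paper's, in different clothing: the paper works with the $N\times N$ coefficient matrices $A_N(\varepsilon_0,\dots,\varepsilon_{N-1})$ directly, notes $\det A_N(\varepsilon_0,\dots,\varepsilon_{N-1})\equiv\det A_N(1,\dots,1)\pmod 2$, and evaluates $\det A_N(1,\dots,1)$ to be $3$ for $3\nmid N$ and $0$ for $3\mid N$ via a Laplace expansion and a Toeplitz determinant recursion. So: right framework and a viable alternative for the endgame, but the decisive modulus is $2$, not $3$, and the root-of-unity rescaling should be discarded.
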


\begin{proof}
 The Cohen map  has the associated second order
difference equation $x_{n+2}=-x_n+\sqrt{1+x_{n+1}^2}$. An straightforward computation shows that
any $N$-periodic orbit of the Cohen map also satisfies the multivalued difference equation
\begin{equation}\label{E_multivaluada}
 g (x_n,x_{n+1},x_{n+2})=(x_n+x_{n+2})^2-x_{n+1}^2-1=0,
\end{equation}
or equivalently, the system (\ref{E-sistemaN-OP}):
$$
\left\{\begin{array}{c}
         (x_0+x_2)^2-x_1^2-1=0, \\
         (x_1+x_3)^2-x_2^2-1=0, \\
         \vdots\\
         (x_{N-2}+x_{0})^2-x_{N-1}^2-1=0, \\
         (x_{N-1}+x_{1})^2-x_{0}^2-1=0.
       \end{array}
 \right.
$$
Using Proposition \ref{L-sistemaN-OP}, there will be  a finite number of $N$ periodic orbits of
the multivalued equation (\ref{E_multivaluada}) if $\mathbf{0}$ is the unique solution of  all the
associated linear systems (\ref{E-sistemaN-OP-homo}):
$$
\left\{\begin{array}{c}
         x_0+x_2=\pm x_1, \\
         x_1+x_3=\pm x_2, \\
         \vdots\\
         x_{N-2} +x_0=\pm x_{N-1}, \\
         x_{N-1} +x_1=\pm x_0,
       \end{array}
 \right.
$$
or equivalently by setting ${\bf x}=(x_0,\ldots,x_{N-1})$, if $\mathbf{0}$ is the unique solution
of the linear systems $A_N(\varepsilon_0,\ldots,\varepsilon_{N-1}){\bf x}=\mathbf{0}$, where
$A_N(\varepsilon_0,\ldots,\varepsilon_{N-1})$ are the $N\times N$ matrices
$$
A_N(\varepsilon_0,\ldots,\varepsilon_{N-1})=\left(
                 \begin{array}{ccccccc}
                   1 & \varepsilon_0 & 1 & 0 &0 &\cdots & 0 \\
                   0 & 1 & \varepsilon_1 & 1 & 0&\cdots & 0\\
                   0 & 0 & 1 & \varepsilon_2 & 1 & \cdots & 0\\
                     &  &  &  &  &  &\\
   0 & 0 & 0 & \cdots & 1 & \varepsilon_{N-3} & 1 \\
   1 & 0 & 0 & \cdots & 0 & 1 & \varepsilon_{N-2} \\
                   \varepsilon_{N-1}  & 1 & 0 & \cdots & 0  & 0 & 1\\
                 \end{array}
\right),
$$
with $\varepsilon_j\in\{-1,1\}$, for each $j=0,\ldots,N-1$.
Proposition \ref{P-Cohen} holds if we prove that  for $N\neq
\dot{3}$ and every choice of $\varepsilon_j\in\{-1,1\}$, with
$j=0,\ldots,N-1$,
$\det(A_N(\varepsilon_0,\ldots,\varepsilon_{N-1}))\neq 0$,

To prove this fact, observe first that
\begin{equation}\label{congu}
\det(A_N(\varepsilon_0,\ldots,\varepsilon_{N-1}))\equiv \det(A_N)
\,\,\mathrm{mod} \,\,2,
\end{equation}
where $A_N:=A_N(1,\ldots,1)$. This is a consequence of the following
simple observation:  If $M=(m_{i,j})$ and $M'=(m'_{i,j})$ are two
square matrices such that $m'_{i,j}\equiv m_{i,j} \,\,\mathrm{mod}
\,\,2$, then $\det(M)\equiv \det(M') \,\,\mathrm{mod} \,\,2$.

Therefore, by \eqref{congu}, the proposition will follow once we
show that
\begin{equation}\label{tres}
a_n=\det(A_N)=\left\{
            \begin{array}{ll}
              3 & \hbox{if } N\neq \dot{3}, \\
              0 & \hbox{if } N=\dot{3}.
            \end{array}
          \right.
\end{equation}
To prove \eqref{tres} we introduce $t_n=\det(T_n[1,1])$ where
$T_n[1,1]$ is the $n\times n$ Toepliz matrix
$$
T_n[1,1]=\left(
                 \begin{array}{ccccccc}
                   1 & 1 & 0 & 0 &0 &\cdots & 0 \\
                   1 & 1 & 1 & 0 & 0&\cdots & 0 \\
                   0 & 1 & 1 & 1 & 0 & \cdots &0 \\
                     &  &  &  &  &  &\\
                     &  &  &  &  &  &\\
                   0 &  & \cdots & 0 & 1 & 1 & 1 \\
                   0  &  & \cdots & 0 &  0 & 1 & 1\\
                 \end{array}
\right).
$$
We will use the next claims, which are inspired in the results in
\cite{CCR}:

\noindent \emph{Claim 1: The sequence $a_n$ satisfies $a_n=(-1)^{n-1}t_{n-1}+2(-1)^{n}t_{n-2}+2$.}

\noindent  \emph{Claim 2: The sequence $t_n$ satisfies $t_n=t_{n-1}-t_{n-2}$, with $t_1=1$ and
$t_2=0$, and therefore it is the $6$-periodic sequence $\{1,0,-1,-1,0,1\}$.}

By using them, a straightforward computation shows that $a_{n+6}=a_n$ and therefore $a_n$ is
$6$-periodic. Finally, since $a_3=0$, $a_4=3$, $a_5=3$, $a_6=0$, $a_7=3$ and $a_8=3$, $a_n$ is
$3$-periodic and the equality \eqref{tres} holds.

Now we prove Claim 1: Let $M_{i,j}$ be the $(i,j)$-minor of $A_n$.
By using the Laplace expansion of the last row of $a_n$ we get
$$
a_n=(-1)^{n+1}M_{n,1}+(-1)^{n+2}M_{n,2}+(-1)^{2n}M_{n,n}=
(-1)^{n+1}M_{n,1}+(-1)^{n}M_{n,2}+M_{n,n}.
$$

It is straightforward to check that
$M_{n,1}=\det(T_{n-1}[1,1])=t_{n-1}$. Observe that
$$
M_{n,2}=\det\left(
                 \begin{array}{ccccccc}
                   1 & 1 & 0 & 0 & 0&\cdots & 0 \\
                   0 & 1 & 1 & 0 & 0&\cdots & 0 \\
                   0 & 1 & 1 & 1 & 0 & \cdots &0 \\
                     &  &  &  &  &  &\\
                                          &  &  &  &  &  &\\
                   0 &  & \cdots & 0 & 1 & 1 & 1 \\
                   1  &  & \cdots & 0 &  0 & 1 & 1\\
                 \end{array}
\right)
 \mbox { and }\,
M_{n,n}=\det\left(
                 \begin{array}{ccccccc}
                   1 & 1 & 1 & 0 & 0&\cdots & 0 \\
                   0 & 1 & 1 & 1 & 0&\cdots & 0 \\
                   0 & 0 & 1 & 1 & 1 & \cdots &0 \\
                     &  &  &  &  &  &\\
                                          &  &  &  &  &  &\\
                   0 & 0 & \cdots &  & 0 & 1 & 1 \\
                   1  & 0 & \cdots &  &  0 & 0 & 1\\
                 \end{array}
\right).
$$
By using again the Laplace expansion on the first column of
$M_{n,2}$  we get that $M_{n,2}=$ $\det\left(T_{n-2}[1,1]\right)$
$+(-1)^n\det\left(L_{n-2}\right),$ where $L_{n-2}$ is a lower
triangular $(n-2)\times (n-2)$ matrix such that all the diagonal
entries are $1$. Hence $M_{n,2}=t_{n-2}+(-1)^n$. Analogously,
$M_{n,n}=\det\left(U_{n-2}\right)+(-1)^n
\det\left(T_{n-2}[1,1]\right)$, where $U_{n-2}$ is an upper
triangular matrix such that all the diagonal entries are $1$.
Therefore, $M_{n,n}=1+(-1)^n t_{n-2}$.  Hence $$\begin{array}{rl}
                                      a_n &  =
(-1)^{n+1}t_{n-1}+(-1)^{n}\left(t_{n-2}+(-1)^n\right)+1+(-1)^n
t_{n-2} \\
                                        & =(-1)^{n-1}t_{n-1}+2(-1)^{n}t_{n-2}+2,
                                     \end{array}
$$ and the claim is proved.

Claim 2 follows by applying once again the Laplace expansion of $t_n$,  obtaining that it
satisfies the linear difference equation $t_n=t_{n-1}-t_{n-2}$ with initial conditions $t_1=1$ and
$t_0=0$. It is a simple computation to check that it is $6$-periodic. ~\end{proof}

\begin{nota} A different proof of Proposition~\ref{P-Cohen}  follows using
previous results that involve the celebrated  Fibonacci numbers
$F_N$. From  \cite[p.78]{CCR} it holds that $\mathrm{per}(A_N),$ the
permanent of $A_N,$ satisfies $\mathrm{per}(A_N)=F_{N}+2F_{N-1}+2$,
where the permanent of a $n\times n$ matrix $M=(m_{i,j})$ is given
by $\mathrm{per}(M)= \sum_{\sigma\in\Sigma_n} \prod_{i=1}^n
m_{i,\sigma(i)}$. Since $\mathrm{per}(M)\equiv \det(M)\,\,
\mathrm{mod}\,\,2$, see for instance \cite{S}, we know that
$\det(A_N)\equiv F_{N}+2F_{N-1}+2\,\,\mathrm{mod}\,\,2$. Finally,
the Fibonacci numbers ($\mbox{mod } 2$) are $1,1,0,1,1,0,\ldots,$
giving our desired result.\end{nota}

\section{Other applications}\label{S-Other}

\begin{proof}[Proof of Theorem~\ref{P_racionals2}] We want to apply
Theorem~\ref{T-Preserva-mesura}. Let $p$ be  an elliptic fixed point of $F$, not
$(2n+1)$-resonant, and such that its first non-vanishing Birkhoff constant is  $B_n$ is purely
imaginary. We already know that  $F$ preserves the measure with density $\nu(x,y)=1/(xy)$, which
is analytic in a neighborhood of $p$. So,  it only remains to check whether the hypothesis about
the finiteness of periodic points of $F$ holds.

Observe that any $N$-periodic point of $F$ is a periodic orbit of the second order recurrence
\begin{equation}\label{recuPQ}
 g (x_n,x_{n+1},x_{n+2})=x_{n+2}x_nQ(x_{n+1})-P(x_{n+1})=0,
\end{equation}
where $f=P/Q$. Hence the $N$-periodic points are the solutions of
$$
\left\{\begin{array}{c}
         x_{n+2}x_nQ(x_{n+1})-P(x_{n+1})=0\,\,,\,\,n=0,1,\ldots,N-3, \\
         x_0x_{N-2}Q(x_{N-1})- P(x_{N-1})=0,\\
         x_1x_{N-1}Q(x_0)- P(x_0)=0.
       \end{array}
 \right.
$$
Setting $P(x)=\sum_{j=0}^{p} a_j x^j$ and $Q(x)=\sum_{j=0}^{q} b_j
x^j$,  since $\deg(f)=p-q>2$, the system~\eqref{E-sistemaN-OP-homo}
associated to the above one is

\begin{equation*}
a_p\,x_i^p=0,\mbox{ for }i=0,1,\ldots,N-1.
\end{equation*}
and $\mathbf{0}$ is its unique solution. Then,  by Proposition~\ref{L-sistemaN-OP}, we have that
for each $N$ there are finitely many $N$-periodic points of $F$. Therefore all the hypotheses of
Theorem~\ref{T-Preserva-mesura} hold and, as a consequence, $F$ is not
$\mathcal{C}^{2n+4}$-locally integrable at $p$, as we wanted to prove.~\end{proof}

It is interesting to notice that when $\deg(f)=p-q<2$   Proposition \ref{L-sistemaN-OP} never
applies. Indeed, in this case, using the notation introduced in~(\ref{Notacio}),
$$
 g _{i,d}(x_{i+2\mbox{ mod } N},x_{i+1\mbox{ mod } N},x_{i\mbox{
mod } N})=b_q\, x_{i\mbox{ mod } N}\cdot x_{i+2\mbox{ mod } N}\cdot
x_{i+1\mbox{ mod } N}^q,
$$
so $\mathbf{0}$ is not an isolated zero of system (\ref{E-sistemaN-OP-homo}). As we have already
explained in the introduction, in this subfamily there are several integrable cases.

When $\deg(f)=p-q=2$, then $$ \begin{array}{rl}  g _{i,d}(x_{i+2\mbox{ mod } N},x_{i+1\mbox{ mod }
N},x_{i\mbox{ mod } N})=&b_q\,x_{i\mbox{ mod }
N}\cdot x_{i+2\mbox{ mod } N}\cdot x_{i+1\mbox{ mod } N}^q\\
{}&-a_p\,x^p_{i+1\mbox{ mod } N}. \end{array}$$ and  there are examples where Proposition
\ref{L-sistemaN-OP} applies and other where it does not. The same happens with
Theorem~\ref{T-Preserva-mesura}, as the proof of Proposition~\ref{familybracfacil} shows.

\begin{proof}[Proof of Proposition~\ref{familybracfacil}]
 First notice that when $C=1$ it is easy to check that the function $V$ given in the statement is
in fact a first integral of the map. So we proceed with $C\ne1.$ In this case we also want  to
apply Theorem~\ref{T-Preserva-mesura} for proving local non-integrability.  We already know that
$F$ preserves the measure with density $\nu(x,y)=1/(xy)$. To continue our proof, we have to show
that  $F$ has an elliptic fixed point with suitable non-zero Birkhoff constant and moreover that
for an unbounded sequence of natural number $\{N_k\}_k$,  $F$ has not continua of $N_k$-periodic
points in a neighborhood of this point. We start proving this second fact.

Each map (\ref{BRgen}) has the associated difference equation
$$
 g (x_n,x_{n+1},x_{n+2})=x_{n+2}x_n-(A+Bx_{n+1}+Cx_{n+1}^2)=0,
$$
with corresponding system (\ref{E-sistemaN-OP-homo}):
\begin{equation}\label{sistemafamBRac}
\left\{\begin{array}{l}
         x_0\,x_2-C\,x_1^2=0, \\
         x_1\,x_3-C\,x_2^2=0, \\
         \vdots\\
         x_{N-3}\,x_{N-1}-C\,x_{N-2}^2=0, \\
         x_{N-2}\,x_0-C\,x_{N-1}^2=0, \\
          x_{N-1}\,x_1-C\,x_0^2=0.
       \end{array}
 \right.
\end{equation}
Observe that  $\mathbf{0}$ is always a solution of
(\ref{sistemafamBRac}). Let us prove  that since $C\neq 1$,  there
are no other solutions. Assume that the system has some non-zero
solution $(y_0,\ldots,y_{N-1})\neq\mathbf{0}$. Straightforward
computations show that then $y_i\neq 0$ for all $i=0,\ldots,{N-1}$.
Moreover, from system (\ref{sistemafamBRac}) one easily gets
$$
C\, (y_0\cdots y_{N-1})^2=(y_0\cdots y_{N-1})^2\neq 0,
$$
fact that is in contradiction with  $C\neq 1$. Therefore, by Proposition~\ref{L-sistemaN-OP}, we
have proved that when $C\ne1$,  for each $N\in\N$ the map has finitely many $N$-periodic points.

Finally, let us study the existence of elliptic fixed points and compute their corresponding
Birkhoff constants when $C\ne1$.

To facilitate the computations we introduce the change of variables $u=\alpha x$, $v=\alpha y$
where $A\alpha^2+B\alpha+(C-1)=0$. It conjugates $F$ with the map
\begin{equation}\label{BRgen2}
   \bar{F}(x,y)=\left(y,\frac{a+(1-a-c)y+cy^2}{x}\right), \quad c\ne0,
\end{equation}
where $a=\alpha^2 A$, $c=C$. Observe that if $F$ has a simple fixed  point then $B^2-4A(C-1)>0$,
so $\alpha$ is actually a  real number. With this change of variables any fixed point of our
initial $F$ is brought to the fixed point $p=(1,1)$ of this new $F$, which is elliptic if and only
if
\begin{equation}\label{elip}
 -1<a-c<3.
\end{equation}

  Introducing the parameter $r^2=(3-a+c)/(a+1-c),$ it is easy to see that $r$ is a
real number and that the eigenvalues associated to $p$ are $\lambda=(r^2-1\pm 2 r\,i)/(r^2+1).$
Straightforward computations give:
\begin{itemize}
\item $\lambda=1$ if and only if $a-c=-1.$
\item $\lambda^2=1$ with $\lambda\ne 1$, or equivalently, $r=0$ if and only if $a-c=3.$
\item $\lambda^3=1$ with $\lambda\ne 1$ if and only if $r^2=\frac{1}{3}$, or equivalently, $a-c=2.$
\item $\lambda^4=1$ with $\lambda^2\ne 1$ if and only if $r=1$, or equivalently, $a-c=1.$
\item $\lambda^5=1$ with $\lambda\ne 1$ if and only if $r^2=1\pm 2\sqrt{5}/5$, or equivalently,
 $a-c=(3\mp\sqrt{5})/2.$
\end{itemize}
Therefore, 1 or 2-resonances can not appear in this new map.

After a change of variables bringing $p$ at the origin, and
computing the first Birkhoff constant, see the equation
(\ref{E-B1-Gen}), we get
$$B_1=
\frac {i\,Q_1(a,c)\,(1+r^2)^3}{16\,r\,(1-3\,r^2)},$$ where
$$
Q_1(a,c):={a}^{4}-3\,{a}^{3}c+3\,{a}^{2}{c}^{2}-a{c}^{3}-4\,{a}^{3}+5\,{a}^{2}c-
2\,a{c}^{2}+{c}^{3}+4\,{a}^{2}+4\,ac-2\,{c}^{2}-a+c.
$$

When $\lambda^3\ne1$ and $Q_1(a,c)\ne 0,$ from Theorem \ref{T-Preserva-mesura}, the map is not
$\mathcal{C}^6$-locally integrable at the elliptic point.

Now assume that $Q_1(a,c)=0$ and consider the second Birkhoff constant. When $(\lambda^4-1)(
\lambda^5-1)\ne0,$ tedious calculations using the formula given in~\cite[App. A]{CGM13}, lead to
$$B_2=\frac{(1+r^2)^8\,(r+i)^2\,i\,Q_2(a,c)}{r^3\, (3r^2-1)^3\, (r^2-1)},$$
where $Q_2(a,c)$ is a real polynomial of degree $11,$ that we omit
for the sake of simplicity.
In fact, the above mentioned computations show that when
$Q_1(a,c)\ne 0,$ then $B_2$ has real part different from $0$, and
therefore by Proposition~\ref{L-dynamicaFN-gen}, the elliptic point
is attractor or repeller and the map is not $\mathcal{C}^2$-locally
integrable at this point.

If $Q_1(a,c)=0$ and $Q_2(a,c)=0$ we have to deal with  a finite
number of values of $a$ and $c.$ All these values, when $c\neq 0$,
are:
\[
(a,c)\in\Big\{(4,1),\,(2-1),\,(3,1),\,(0,1),\, \Big(\frac{5\pm
3\sqrt 5}{4},\frac{\pm\sqrt 5-1}{4} \Big) \Big\}.
\]
The first two cases are not under condition~(\ref{elip}) and hence
do not give rise to elliptic fixed points. The third and fourth ones
are inside the integrable case $c=1.$ Finally, for the last two
pairs, the corresponding eigenvalues are
$$\lambda=\frac{\sqrt{5}}{\sqrt{5}-5}\pm\frac{\sqrt{25-10\,\sqrt{5}}}{\sqrt{5}-5}\,i$$
which satisfy $\lambda^5=1.$ These cases correspond to 5-resonances and they are not covered by
our approach. ~\end{proof}

\begin{nota} From the proof of the above theorem we get a slightly stronger result. If
$\lambda^3\ne1$ and $B_1\ne 0$, the map is not $\mathcal{C}^6$-locally integrable at the elliptic
fixed point, still if $\lambda^k=1$ for $k=4$ or $5$.\end{nota}

\begin{proof}[Proof of Theorem~\ref{P_racionals3}]
Its proof is similar to the one of Theorem~\ref{P_racionals2} and we
omit it for the sake of shortness. In this case, the condition
$\deg(f)>1$ is the one that allows to apply
Proposition~\ref{L-sistemaN-OP} to ensure that, for each $N\in\N$,
the map~(\ref{apm}) has finitely many $N$-periodic points, and then
apply Theorem~\ref{T-Preserva-mesura}.
\end{proof}

The following remark shows a relation between the two families of maps~\eqref{fyx}
and~\eqref{apm}.

\begin{nota} Consider the diffemorphism $\Psi:\R^2\rightarrow (\R^+)^2$,
$\Psi(x,y)=(\operatorname{e}^x,\operatorname{e}^y)$ with inverse $\Psi^{-1}(x,y)=(\log x,\log y)$.
It holds that
\begin{enumerate}[(i)]

\item  If $F(x,y)=(y,f(y)/x)$ then  $\Psi^{-1}\circ F \circ \Psi(x,y)=\big(y,-x+\log f
\big(\operatorname{e}^y\big)\big)$.

\item If $F(x,y)=(y,-x+f(y))$ then  $\Psi\circ F \circ \Psi^{-1}(x,y)=
\Big(y,\dfrac{\operatorname{e}^{f(\log y)}}x\Big)$.
\end{enumerate}
Therefore, Theorems~\ref{P_racionals2} and~\ref{P_racionals3} can also be applied  to some
non-rational maps of the forms~\eqref{fyx} or~\eqref{apm}. More concretely, the maps  such that,
via the changes of variables $\Psi$ or $\Psi^{-1}$, can be transformed into rational maps of the
other family.
\end{nota}

\begin{proof}[Proof of Corollary~\ref{pertur}] (i)  Consider the integrable
MGM map $F_0$. By Lemma~\ref{l-mira}, when  $a\in
(-2,\infty)\setminus \{-1,0,2\}$ it has at least one elliptic fixed
point, say $p_0$, such that it is not $3$-resonant and with non-zero
purely imaginary first Birkhoff constant $B_1$. When
$0\ne|\varepsilon|$ is small enough, by continuity with respect to
$\varepsilon$ and since $F_\varepsilon$ is area preserving, the map
$F_\varepsilon$  also has  an elliptic fixed point, say
$p_\varepsilon$, satisfying the same properties, that is, being not
$3$-resonant and with non-zero purely imaginary first Birkhoff
constant $B_1(\varepsilon)$. Moreover, since
\[
\deg\Big( \frac{a y}{1+y^2}+\varepsilon\,g(y)\Big)=\deg(g(y))>1,
\]
we are under the hypotheses of Theorem~\ref{P_racionals3} for $n=1$ and, therefore,
$F_\varepsilon$ is not $\mathcal{C}^{6}$-locally integrable at $p_\varepsilon,$ as we wanted to
prove.

(ii) Its proof follows exactly the same steps that the proof of item (i), changing
Lemma~\ref{l-mira} and Theorem~\ref{P_racionals3}, by the corresponding results
Lemma~\ref{L-lyness} and Theorem~\ref{P_racionals2}.~\end{proof}

\begin{proof}[Proof of Corollary~\ref{pertur2}] When $\varepsilon=0$  the result follows
from Theorem \ref{T-Cohen}. When $\varepsilon\ne0$, the proof follows the same scheme  of the one
of Corollary~\ref{pertur}. The main difference is the way we show that, for each $N\in\N$,
$H_\varepsilon$ has finitely many $N$-periodic orbits. Similarly that in the  case
$\varepsilon=0$, we have to study the number of solutions of
$$
\left\{\begin{array}{c}
         (x_0+x_2-\varepsilon g(x_1))^2-x_1^2-1-\varepsilon h(x_1)=0, \\
         (x_1+x_3-\varepsilon g(x_2))^2-x_2^2-1-\varepsilon h(x_2)=0, \\
         \vdots\\
         (x_{N-2}+x_{0}-\varepsilon g(x_{N-1}))^2-x_{N-1}^2-1-\varepsilon h(x_{N-1})=0, \\
         (x_{N-1}+x_{1}-\varepsilon g(x_0))^2-x_{0}^2-1-\varepsilon h(x_0)=0.
       \end{array}
 \right.
$$
Writing $g=P/Q$ and $h=R/S$ with $P(x)=\sum_{j=0}^{p} a_j x^j$,  $Q(x)=\sum_{j=0}^{q} b_j x^j,$
$R(x)=\sum_{j=0}^{r} c_j x^j$,  $S(x)=\sum_{j=0}^{s} d_j x^j$, the above set of rational equations
can be transformed into a polynomial one. Recall that $\deg(g)=p-q$ and $\deg(h)=r-s.$ Assume for
instance that $\deg(g)>1$ and $2\deg(g)>\deg(h)$; the other cases can be studied similarly. It
holds that the system of equations \eqref{Notacio} associated to the above one is
\[
d_s a_p^2 x_j^{2p+s}=0, \mbox{ for } j=0,1,\ldots N-1,
\]
which trivially has the only solution ${\bf x}={\bf 0}$. Hence by
Proposition \ref{L-sistemaN-OP}, the map $H_\varepsilon$ has
finitely many $N$-periodic orbits, as we wanted to prove.
\end{proof}

\begin{nota} When in Corollary~\ref{pertur2} it holds that $\deg(h)=2\deg(g)>2$, by using
the same approach we obtain that the same result holds when $d_s a_p^2\ne c_r b_q^2.$
\end{nota}

\end{document}